\newtheorem*{theoA}{Theorem A}
\newtheorem*{theoB}{Theorem B}
\newtheorem*{theoC}{Theorem C}
\newtheorem{theo}{Theorem}[section]
\newtheorem{lem}{Lemma}[section]
\newtheorem{rem}{Remark}[section]
\newtheorem{que}{Question}[section]
\newtheorem{open problem}{Open problem}[section]
\newcommand{\pa}{\partial}
\newcommand{\D}{\mathbb{D}}
\newcommand{\C}{\mathbb{C}}
\newcommand{\N}{\mathbb{N}}
\newcommand{\be}{\begin{equation}}
\newcommand{\ee}{\end{equation}}
\newcommand{\bs}{\begin{small}}
\newcommand{\es}{\end{small}}
\newcommand{\beas}{\begin{eqnarray*}}
\newcommand{\eeas}{\end{eqnarray*}}
\newcommand{\bea}{\begin{eqnarray}}
\newcommand{\eea}{\end{eqnarray}}
\renewcommand{\epsilon}{\varepsilon}
\numberwithin{equation}{section}
\begin{document}
\title[The Bohr's Phenomenon]{The Bohr's Phenomenon involving multiple Schwarz functions}
\author[V. Allu, R. Biswas and R. Mandal]{Vasudevarao Allu, Raju Biswas and Rajib Mandal}
\date{}
\address{Vasudevarao Allu, Department of Mathematics, School of Basic Science, Indian Institute of Technology Bhubaneswar, Bhubaneswar-752050, Odisha, India.}
\email{avrao@iitbbs.ac.in}
\address{Raju Biswas, Department of Mathematics, Raiganj University, Raiganj, West Bengal-733134, India.}
\email{rajubiswasjanu02@gmail.com}
\address{Rajib Mandal, Department of Mathematics, Raiganj University, Raiganj, West Bengal-733134, India.}
\email{rajibmathresearch@gmail.com}
\maketitle
\let\thefootnote\relax
\footnotetext{2020 Mathematics Subject Classification: 30A10, 30B10, 30H05.}
\footnotetext{Key words and phrases: Bounded analytic functions, Bohr radius, Rogosinski radius, improved Bohr radius, refined Bohr radius.}
\begin{abstract} 
The primary objective of this paper is to establish several sharp versions of Bohr inequalities for bounded analytic functions in the unit disk $\mathbb{D} := \{z\in\mathbb{C} : |z| < 1\}$ involving multiple Schwarz functions. 
Moreover, we obtain an improved version of the classical Rogosinski inequality for analytic functions in $\mathbb{D}$.
\end{abstract}
\section{Introduction and Preliminaries}
\noindent Let $H_\infty$ denote the class of all bounded analytic functions in the unit disk $\mathbb{D}:=\{z\in\C: |z|<1\}$ with the supremum norm $\Vert f\Vert_\infty:=\sup_{z\in\D}|f(z)|$. 
If $f\in H_\infty$ has the Taylor series expansion $f(z)=\sum_{n=0}^{\infty} a_nz^n$, then 
\bea\label{e2} \sum_{n=0}^{\infty}|a_n|r^n\leq \Vert f\Vert_\infty\quad\text{for}\quad|z|= r\leq\frac{1}{3}.\eea
Here, the quantity $1/3$ is known as Bohr radius and it cannot be improved. The inequality (\ref{e2}) is known as the classical Bohr inequality. 
In fact, H. Bohr \cite{B1914} showed the inequality (\ref{e2}) only for $r\leq 1/6$. However, subsequently, Weiner, Riesz, and Schur \cite{D1995} independently proved that (\ref{e2}) holds for $r\leq 1/3$. 
Note that, if $|f(z)|\leq 1$ in $\mathbb{D}$ such that $|f(z_0)|=1$ for some $z_0\in\mathbb{D}$, then $f(z)$ reduces to a unimodular constant function (see \cite[Strict Maximum 
Principle (Complex Version), P. 88]{G2000}).\\[2mm]
\indent In addition to the notion of the Bohr radius, there is another concept known as the Rogosinski radius \cite{R1923}, which is defined as follows:
Let $f(z)=\sum_{n=0}^{\infty}a_nz^n$ be analytic in $\mathbb{D}$ such that $|f(z)|<1$ in $\Bbb{D}$. Then, $\left|\sum_{n=0}^{N-1}a_nz^n\right|<1$ for every $N\geq 1$ in the disk $|z|<1/2$. The number $1/2$ is sharp. This inequality is called the classical Rogosinski inequality for bounded analytic 
functions. In 2017, Kayumov and Ponnusamy\cite{KP2017} considered the Bohr-Rogosinski sum $R_N^f(z)$ which is defined as follows:
\beas R_N^f(z):=|f(z)|+\sum_{n=N}^{\infty}|a_n||z|^n, \quad N\in\N.\eeas
It is evident that $|S_N(z)|=\left|f(z)-\sum_{n=N}^{\infty}a_nz^n\right|\leq R_N^f(z)$. Moreover, the Bohr-Rogosinski sum
$R_N^f(z)$ is related to the classical Bohr sum in which $N=1$ and $f(z)$ is replaced by $f(0)$. Let $f$ be an analytic function in $\D$ with $|f(z)|<1$
in $\D$. Kayumov and Ponnusamy \cite{KP2017} have defined the Bohr-Rogosinski radius as the largest number $ r_0\in(0, 1)$ such that the inequality $R_N^f(z)\leq 1$ holds for $|z|< r_0$. In \cite{KP2017}, Kayumov and Ponnusamy have obtained several results regarding to the Bohr-Rogosinski radius.\\[2mm]
Prior to proceeding with the discussion, it is necessary to establish certain notations.
Let $\mathcal{B}=\{f\in H_\infty :\Vert f\Vert_\infty\leq 1\}$ and $m\in\N$, let 
\beas\mathcal{B}_m=\left\{\omega\in\mathcal{B}: \omega(0)=\omega'(0)=\cdots=\omega^{(m-1)}(0)=0\quad\text{and}\quad \omega^{(m)}(0)\not=0\right\}.\eeas
\noindent Let $f$ be an analytic function in $\mathbb{D}$ and $\mathbb{D}_ r:=\{z\in\mathbb{C}: |z|< r \}$ for $0< r<1$. Let $S_ r$ denote the planar integral  
\beas S_ r=\int_{\mathbb{D}_ r} |f'(z)|^2 dA(z).\eeas
Furthermore, if $f(z)=\sum_{n=0}^\infty a_n z^n$, then it is well-known that $S_ r/\pi=\sum_{n=1}^\infty n|a_n|^2  r^{2n}$ and if $f$ is univalent, then $S_ r$ is the area of the image $f(\mathbb{D}_ r)$. \\[1mm]
\indent In 2018, Kayumov and Ponnusamy \cite{KP2018} obtained several improved versions of Bohr's inequality for the bounded analytic functions in $\mathbb{D}$.
Based on the initiation of Kayumov and Ponnusamy \cite{KP2017, KP2018}, a number of authors have studied several versions of Bohr-type inequalities (see \cite{AKP2020,LSX2018,KKP2021}).\\[1mm] 
 Recently, Liu {\it et al.} \cite{LLP2021} have obtained the following refined version of Bohr inequalities.
\begin{theoA}\cite{LLP2021}
Let $f(z)=\sum_{n=0}^\infty a_nz^n$ be analytic in $\mathbb{D}$ and $|f(z)|<1$ in $\mathbb{D}$. Then,
\beas |f(z)|+|f'(z)| r+\sum_{n=2}^\infty |a_n| r^n+\left(\frac{1}{1+|a_0|}+\frac{ r}{1- r}\right)\sum_{n=1}^\infty |a_n|^2  r^{2n}\leq 1\eeas
for $|z|=r\leq (\sqrt{17}-3)/4$. The number $(\sqrt{17}-3)/4$ is best possible. \end{theoA}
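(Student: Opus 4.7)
The plan is to bound each of the four blocks on the left-hand side in terms of $a := |a_0| \in [0,1)$ and $r=|z|$, and then reduce the claim to a polynomial condition in $r$. The case $a=1$ forces $f$ to be a unimodular constant by the maximum principle, so the inequality is trivial. First, the Schwarz--Pick inequality gives $|f(z)| \leq (a+r)/(1+ar)$ and $|f'(z)|(1-r^2) \leq 1-|f(z)|^2$; combining these, together with the observation that the map $t \mapsto t + (1-t^2)r/(1-r^2)$ is increasing on $[0,1]$ for the small values of $r$ at stake, yields
\beas |f(z)| + |f'(z)|r \;\leq\; \frac{a+r}{1+ar} + \frac{r(1-a^2)}{(1+ar)^2}. \eeas
Second, the Schur--Wiener coefficient bound $|a_n| \leq 1-a^2$ for $n \geq 1$ gives the geometric majorants
\beas \sum_{n=2}^{\infty}|a_n|\,r^n \leq \frac{(1-a^2)\,r^2}{1-r}, \qquad \sum_{n=1}^{\infty}|a_n|^2\,r^{2n} \leq \frac{(1-a^2)^2\,r^2}{1-r^2}. \eeas

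Substituting these three estimates into the left-hand side produces an upper bound $\Psi(a,r)$. Using the identities $1-(a+r)/(1+ar)=(1-a)(1-r)/(1+ar)$ and $1-a^2=(1-a)(1+a)$, one factors a common $(1-a)$ out of $1-\Psi(a,r)$ and reduces the claim to showing that an explicit expression $H(a,r)$ is nonnegative on $[0,1] \times [0,(\sqrt{17}-3)/4]$. The contribution of the refined term $\sum |a_n|^2 r^{2n}$ carries an additional factor of $(1-a)$ inside $H$, so it vanishes at $a=1$ and does not affect the critical radius. At $a=1$, clearing denominators by $(1+r)^2(1-r) > 0$ reduces $H(1,r)\geq 0$ to
\beas (1-r)^2(1+r) - 2r(1-r) - 2r^2(1+r)^2 \;\geq\; 0, \eeas
i.e.\ $1 - 3r - r^2 - 3r^3 - 2r^4 \geq 0$, which factors cleanly as $(1+r^2)(1-3r-2r^2)\geq 0$. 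Since $1+r^2>0$, this is equivalent to $2r^2+3r-1\leq 0$, giving exactly $r\leq (\sqrt{17}-3)/4$.

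For sharpness, the family $f_a(z)=(a-z)/(1-az)$ evaluated at $z=-r$ saturates each of the estimates above in the limit $a\to 1^{-}$; a first-order expansion in $\varepsilon=1-a$ reproduces precisely the polynomial $1-3r-r^2-3r^3-2r^4$, confirming that the constant $(\sqrt{17}-3)/4$ is best possible. The principal difficulty is the middle step: although the $a=1$ calculation is clean, verifying $H(a,r)\geq 0$ uniformly over $a\in[0,1)$ is more than the limiting computation, since the intermediate bounds are simultaneously sharp only as $a\to 1$. I expect this to follow either from a monotonicity check of the form $\partial H/\partial a\geq 0$ on the rectangle, or by organising $H$ as a sum of manifestly nonnegative pieces after further algebraic manipulation.
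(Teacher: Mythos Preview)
Your outline is essentially right up to the $a=1$ computation, and the factorization $(1+r^{2})(1-3r-2r^{2})$ giving $r\le(\sqrt{17}-3)/4$ is correct. However, the proof is genuinely incomplete at the point you yourself flag: you have \emph{not} shown $H(a,r)\ge 0$ uniformly in $a\in[0,1)$, only in the limit $a\to 1^{-}$. Saying ``I expect this to follow from a monotonicity check'' is not a proof. Because your fourth bound $(1/(1+a)+r/(1-r))\sum|a_n|^{2}r^{2n}\le(1-a)^{2}(\cdots)$ contributes an extra term carrying an additional factor $(1-a)$, the resulting $H$ picks up pieces whose $a$-derivative is \emph{not} obviously of one sign; you would have to do real work here, and the claimed monotonicity $\partial H/\partial a\ge 0$ is not automatic.

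The paper's route avoids this difficulty entirely by replacing your two separate majorants for the third and fourth blocks with a single sharper one, namely Lemma~\ref{lem5} (with $N=2$, $t=0$):
\[
\sum_{n=2}^{\infty}|a_n|r^{n}+\left(\frac{1}{1+|a_0|}+\frac{r}{1-r}\right)\sum_{n=1}^{\infty}|a_n|^{2}r^{2n}\ \le\ \frac{(1-a^{2})\,r^{2}}{1-r}.
\]
With this bound, after factoring out $(1-a)$ one is left with
\[
G_1(a,r)=\frac{(1+a)r}{1+ar}+\frac{(1+a)(1+ar)r^{2}}{1-r}-(1-r),
\]
whose partial derivative in $a$ is manifestly nonnegative, so $G_1(a,r)\le G_1(1,r)$ and the $a=1$ calculation closes the argument for all $a$. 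In short, the missing ingredient in your proof is precisely Lemma~\ref{lem5}; with it, the ``principal difficulty'' you identify disappears, while without it you still owe a nontrivial verification.
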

\begin{theoB}\cite{LLP2021}  Let $f(z)=\sum_{n=0}^{\infty} a_nz^n$ be analytic in $\mathbb{D}$ and $|f(z)|\leq 1$. Then,
\beas |f(z)|^2+|f'(z)| r+\sum_{n=2}^\infty |a_n| r^n+\left(\frac{1}{1+|a_0|}+\frac{ r}{1- r}\right)\sum_{n=1}^\infty |a_n|^2  r^{2n}\leq 1\eeas
for $|z|=r\leq r_0$, where $r_0\approx 0.385795$ is the unique positive root of the equation $1-2r-r^2-r^3-r^4=0$. The number $r_0$ is best possible.\end{theoB}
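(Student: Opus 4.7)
My approach adapts the strategy of Theorem A to the strengthened first term $|f(z)|^2$. Set $a := |a_0|$. By the Schwarz--Pick inequality, $|f(z)|^2 \leq \bigl((a+r)/(1+ar)\bigr)^2$ and moreover $|f(z)|^2 + (1-r^2)|f'(z)| \leq 1$. The latter rearranges, for $r < (\sqrt{5}-1)/2$, to
\[
|f(z)|^2 + |f'(z)|\,r \;\leq\; 1 - (1-r-r^2)\,|f'(z)|,
\]
packaging the first two terms of the left-hand side efficiently. For the tail I apply Schur's coefficient inequality $|a_n| \leq 1 - a^2$ for $n \geq 1$, yielding $\sum_{n \geq 2}|a_n|\,r^n \leq (1-a^2)\,r^2/(1-r)$, together with the doubled version $|a_n|^2 \leq (1-a^2)|a_n|$ to obtain $\sum_{n \geq 1}|a_n|^2\,r^{2n} \leq (1-a^2)^2\,r^2/(1-r^2)$. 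Assembling these bounds produces an upper estimate $\Psi(a, r)$ for the left-hand side, with $\Psi(1, r) = 1$ identically.

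The heart of the argument is to show $\Psi(a, r) \leq 1$ on $[0, 1] \times [0, r_0]$. Factoring $1 - \Psi(a, r) = (1-a)\,G(a, r)/D(a, r)$ with $D > 0$, the inequality reduces to $G(a, r) \geq 0$. At the boundary $a = 1$, after clearing denominators, the binding constraint becomes exactly $1 - 2r - r^2 - r^3 - r^4 \geq 0$, which defines $r_0$. For the interior $a \in (0, 1)$ one argues by monotonicity of $G$ in $a$, or by expanding $G$ in powers of $(1-a)$ and verifying non-negativity of each coefficient on $[0, r_0]$.

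Sharpness follows by taking $f_a(z) := (a+z)/(1+az)$ and evaluating at $z = r$. Direct computation gives, with $\epsilon := 1-a$,
\[
\text{LHS} \;=\; 1 \;-\; \frac{2\epsilon\,\bigl[\,1 - 2r - r^2 - r^3 - r^4\,\bigr]}{(1+r)^2(1-r)} \;+\; O(\epsilon^2).
\]
For $r > r_0$ the bracket is negative, so the coefficient of $\epsilon$ is strictly positive and the LHS exceeds $1$ once $a$ is sufficiently close to $1$; hence $r_0$ cannot be improved.

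The main obstacle will be the interior verification $G(a, r) \geq 0$: the individual coefficient estimates above are not simultaneously tight at the Möbius extremal, so a naive assembly can overshoot $1$ for intermediate $a$. This is precisely where the Schwarz--Pick coupling $|f|^2 + (1-r^2)|f'| \leq 1$ is essential---it efficiently absorbs the first two terms of the LHS into $1$ minus a controllable slack---and the delicate algebra will be to show that the resulting $G(a, r)$ remains non-negative throughout $[0, 1] \times [0, r_0]$, with equality only at the boundary $a = 1$.
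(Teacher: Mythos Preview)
Your plan has two genuine gaps.

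\textbf{First}, the ``packaging'' $|f(z)|^2 + r|f'(z)| \leq 1 - (1-r-r^2)|f'(z)|$ does \emph{not} produce an upper estimate $\Psi(a,r)$ in $a$ and $r$ alone: the right-hand side still carries $|f'(z)|$, and there is no lower bound on $|f'(z)|$ in terms of $a=|a_0|$ (take e.g.\ $f$ with a critical point at $z$). Hence you cannot absorb a positive tail bound $T(a,r)$ into this ``slack.'' The paper (in its Theorem~\ref{Th2}, of which Theorem~B is the case $m=p=k=1$) proceeds in the opposite direction: it bounds $|f'|\leq (1-|f|^2)/(1-r^2)$ from \emph{above}, obtaining
\[
|f|^2+r|f'|\;\leq\;\Bigl(1-\frac{r}{1-r^2}\Bigr)|f|^2+\frac{r}{1-r^2},
\]
and then uses $|f|^2\leq\bigl((a+r)/(1+ar)\bigr)^2$, which is legitimate since $1-r/(1-r^2)\geq 0$ on the relevant range. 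This genuinely gives a function of $(a,r)$ only.

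\textbf{Second}, even after that fix, your separate Schur bounds on the two tail sums are too loose to close the argument on all of $[0,1]\times[0,r_0]$. A direct check at $a=0$, $r=r_0\approx 0.3858$ shows that your resulting $\Psi(0,r_0)\approx 1.06>1$. The paper instead invokes Lemma~\ref{lem5}, which bounds the \emph{combined} tail sharply:
\[
\sum_{n=2}^\infty |a_n|r^n+\Bigl(\frac{1}{1+a}+\frac{r}{1-r}\Bigr)\sum_{n=1}^\infty |a_n|^2 r^{2n}\;\leq\;\frac{(1-a^2)r^2}{1-r}.
\]
With this, everything reduces to $G_4(a,r):=-(1-r-r^2)/(1+ar)^2+r^2/(1-r)\leq 0$; since $G_4$ is increasing in $a$, one only needs $G_4(1,r)\leq 0$, which is exactly $1-2r-r^2-r^3-r^4\geq 0$.

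Your sharpness computation with $f_a(z)=(a+z)/(1+az)$ and the expansion in $\epsilon=1-a$ is correct and matches the paper's extremal analysis.
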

\begin{theoC}\cite{LLP2021}  Let $f(z)=\sum_{n=0}^{\infty} a_nz^n$ be analytic in $\mathbb{D}$ and $|f(z)|\leq 1$. Then,
\beas \sum_{n=0}^\infty |a_n| r^n+\left(\frac{1}{1+|a_0|}+\frac{r}{1- r}\right)\sum_{n=1}^\infty |a_n|^2  r^{2n}+\frac{8}{9}\left(\frac{S_r}{\pi}\right)\leq 1\eeas
for $|z|=r\leq 1/3$. The numbers $1/3$ and $8/9$ cannot be improved.\end{theoC}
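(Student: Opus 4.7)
Set $a:=|a_0|$. My plan adapts the refined Bohr-inequality strategy of Liu--Liu--Ponnusamy \cite{LLP2021} (cf.\ Theorems~A--B above), augmented with the area-type estimate of Kayumov--Ponnusamy \cite{KP2018}. The key tools for $f\in\mathcal{B}$ are the Wiener bound $|a_n|\le 1-a^2$ ($n\ge 1$), Parseval's identity $\sum_{n\ge 1}|a_n|^2\le 1-a^2$, and the Schur parametrisation $f(z)=(a+zg(z))/(1+\bar{a}zg(z))$ with $g\in\mathcal{B}$, which yields the sharper estimates
\[
\sum_{n\ge 1}|a_n|^2 r^{2n}\le \tfrac{(1-a^2)^2 r^2}{1-a^2 r^2},\qquad \tfrac{S_r}{\pi}\le \tfrac{(1-a^2)^2 r^2}{(1-a^2 r^2)^2},
\]
both attained for the M\"obius automorphism $\phi_a(z):=(a-z)/(1-az)$.

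Substituting these alongside $\sum_{n\ge 0}|a_n|r^n\le a+(1-a^2)r/(1-r)$ into the statement is not quite enough: the naive Bohr bound is too crude near the Blaschke limit and by itself produces values strictly above $1$ at intermediate $a$. To close the gap I would couple it to the quadratic term through the refined Bohr-type inequality
\[
\sum_{n\ge 0}|a_n|r^n + \tfrac{1}{1+a}\sum_{n\ge 1}|a_n|^2 r^{2n}\le 1\qquad(r\le 1/3),
\]
established in the refined-Bohr literature and used implicitly in \cite{LLP2021}. The residual $\tfrac{r}{1-r}\sum|a_n|^2 r^{2n}+\tfrac{8}{9}\tfrac{S_r}{\pi}$ is then dominated using the Schur-sharp estimates above, and the problem reduces to a single polynomial inequality in $a$ and $r$. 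At $r=1/3$, using $1-a^2/9=(3-a)(3+a)/9$ to clear common denominators and factoring out $(1-a)$ (which is a root because equality holds in the Blaschke limit), the quotient is shown to be nonnegative on $[0,1]$ by direct algebraic manipulation. Monotonicity in $r$ on $(0,1/3]$ of each summand then extends the inequality to all such $r$.

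Sharpness is verified by testing $\phi_a$ as $a\to 1^-$: the two Schur-sharp bounds and the refined-Bohr coupling each become equalities in the limit. A Taylor expansion of the left-hand side at $(a,r)=(1,1/3)$ gives $1+O((1-a)^3)$; replacing $1/3$ by a larger radius or $8/9$ by any larger constant produces a first-order violation along this family, confirming that both constants are best possible.

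The main obstacle is arranging the combination of the refined-Bohr inequality with the Schur-sharp area bound so that all the slack is absorbed \emph{exactly} by the area term --- this is where the coefficient $8/9$ is forced, matching the factor $(1-r^2)^2$ in the area denominator at $r=1/3$. Carrying out the resulting polynomial bookkeeping is the heart of the proof and parallels the delicate algebra in \cite{LLP2021} for Theorems~A and~B.
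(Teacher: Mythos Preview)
Your sketch assembles the right ingredients but the decomposition you describe has a real gap. You propose to first invoke
\[
\sum_{n\ge 0}|a_n|r^n+\frac{1}{1+a}\sum_{n\ge 1}|a_n|^2 r^{2n}\le 1\qquad(r\le 1/3),
\]
and then ``dominate'' the residual $\frac{r}{1-r}\sum|a_n|^2 r^{2n}+\frac{8}{9}\,S_r/\pi$ via the Schur--sharp estimates. But that residual is nonnegative, so once the first piece has already been bounded by $1$ there is no slack left to absorb it; no subsequent polynomial inequality in $(a,r)$ can rescue the argument at that stage.

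The paper (via Theorem~\ref{Th6} with $k=m=1$) uses instead the full strength of Lemma~\ref{lem5} with $N=1$, namely
\[
\sum_{n\ge 1}|a_n|r^n+\Bigl(\frac{1}{1+a}+\frac{r}{1-r}\Bigr)\sum_{n\ge 1}|a_n|^2 r^{2n}\le \frac{(1-a^2)r}{1-r},
\]
so that the $\frac{r}{1-r}$--weighted quadratic is already absorbed and is \emph{not} part of any residual. The right-hand side is kept as the explicit quantity $a+(1-a^2)r/(1-r)$ rather than collapsed to $1$; at $r=1/3$ the slack relative to $1$ is exactly $(1-a)^2/2$. The area term is then bounded crudely by $|a_n|\le 1-a^2$ as $\lambda(1-a^2)^2 r^2/(1-r^2)^2$, and a monotonicity-in-$a$ argument shows this is dominated by the slack for all $a\in[0,1]$ precisely when $\lambda\le 8/9$. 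Your tighter Schur--sharp area bound would also work here, but the essential point you are missing is retaining the explicit intermediate bound from the refined Bohr step instead of prematurely replacing it by $1$.
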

In addition to these, a number of authors have studied several other aspects of Bohr's inequality.
We refer to \cite{ABM2024,AKP2019,AAH2022,AH2021,1AH2021,AH2022,BDK2004,BB2004,EPR2019,EPR2021,FR2010,HLP2020,IKP2020,1KP2018,LP2019,MBG2024,PW2020} and the references listed therein for an in-depth investigation on Bohr radius.\\[2mm]
\noindent In light of the aforementioned findings, several questions naturally arise with regard to this study.
\begin{que}\label{Q1}
Is it possible to establish several improved and refined versions of the Bohr inequality for bounded analytic functions that involve more than one Schwarz function?
\end{que}
\begin{que}\label{Q2}Can an improved version of the classical Rogosinski inequality be formulated that includes multiple Schwarz functions and substitutes the Taylor series coefficients with the derivatives of the analytic function? \end{que}
The purpose of this paper is primarily to provide the affirmative answers to Questions \ref{Q1} and \ref{Q2}.
\section{Some lemmas}
The following lemmas are essential to prove the main results of this paper.
\begin{lem}\label{lem1} \cite[Pick's invariant form of Schwarz's lemma]{K2006} Suppose $f$ is analytic in $\mathbb{D}$ with $|f(z)|\leq1$, then 
\beas |f(z)|\leq \frac{|f(0)|+|z|}{1+|f(0)||z|}\quad\text{for}\quad z\in\mathbb{D}.\eeas\end{lem}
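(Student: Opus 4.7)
The plan is to prove the statement via the standard reduction of Pick's invariant form to the classical Schwarz lemma, by composing $f$ with a Möbius automorphism of $\mathbb{D}$ that sends $f(0)$ to $0$.

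First set $a = f(0)$. If $|a| = 1$, the strict maximum principle forces $f \equiv a$, in which case both sides of the stated inequality equal $1$ and there is nothing to prove; hence I may assume $|a| < 1$. Introduce the disk automorphism $\phi_a(w) = (w-a)/(1-\bar{a}w)$, whose inverse is $\phi_a^{-1}(u) = (u+a)/(1+\bar{a}u)$, and form $g := \phi_a \circ f$. Then $g$ is analytic on $\mathbb{D}$ with $|g| \leq 1$ and $g(0) = \phi_a(a) = 0$, so the classical Schwarz lemma yields $|g(z)| \leq |z|$ for all $z \in \mathbb{D}$.

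Inverting, $f(z) = (g(z) + a)/(1 + \bar{a}\, g(z))$, so the result will follow from the pointwise inequality
$$\left|\frac{u+a}{1+\bar{a}u}\right| \leq \frac{|u|+|a|}{1+|a||u|} \qquad (u \in \mathbb{D}),$$
combined with the monotonicity of the map $s \mapsto (s+|a|)/(1+s|a|)$ on $[0,1)$ (its derivative is $(1-|a|^2)/(1+s|a|)^2 > 0$) and the bound $|g(z)| \leq |z|$ obtained above.

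The main (and essentially only) obstacle is the pointwise inequality. I would establish it by writing $u = s e^{i\theta}$ and $a = r e^{i\alpha}$ with $s = |u|$, $r = |a|$, and setting $T = 2sr\cos(\theta - \alpha) \in [-2sr,\, 2sr]$; the squared ratio then simplifies to $(s^2 + r^2 + T)/(1 + s^2 r^2 + T)$, whose derivative with respect to $T$ equals $(1-s^2)(1-r^2)/(1 + s^2 r^2 + T)^2 > 0$. Hence the maximum over admissible $T$ is attained at $T = 2sr$ (i.e., when $u$ and $a$ point in the same direction), yielding $(s+r)^2/(1+sr)^2$, which after taking square roots is exactly the required bound.
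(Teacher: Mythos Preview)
Your proof is correct and follows the standard reduction of Pick's invariant form to the classical Schwarz lemma via a M\"obius automorphism; the pointwise estimate for $|\phi_a^{-1}(u)|$ and the monotonicity argument are both handled cleanly. Note, however, that the paper does not supply its own proof of this lemma: it is quoted as a known result with a citation to Krantz's \emph{Geometric Function Theory}, so there is no in-paper argument to compare against. Your argument is essentially the textbook proof one would find in the cited reference.
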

\begin{lem}\cite{DF2008}\label{lem2} Suppose $f$ is analytic in $\mathbb{D}$ with $|f(z)|\leq1$, then we have 
\beas \frac{\left|f^{(n)}(z)\right|}{n!}\leq \frac{1-|f(z)|^2}{(1-|z|)^{n-1}(1-|z|^2)}\quad{and}\quad |a_n|\leq 1-|a_0|^2\;\;\text{for}\quad n\geq 1\quad \text{and}\quad |z|<1.\eeas\end{lem}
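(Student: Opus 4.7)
The plan is to establish the second inequality $|a_n|\leq 1-|a_0|^2$ first, which is the classical Wiener coefficient bound, and then to deduce the derivative estimate by pulling $f$ back via a disk automorphism and applying the Cauchy integral formula together with that coefficient bound.

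For the Wiener inequality, I would assume $|a_0|<1$ (if $|a_0|=1$ then $f$ is a unimodular constant by the strict maximum principle and the statement is trivial). With $\omega:=e^{2\pi i/n}$, define the averaged function
\[
h(z):=\frac{1}{n}\sum_{k=0}^{n-1}f(\omega^k z),
\]
which is analytic with $|h|\leq 1$ on $\mathbb{D}$ and has Taylor expansion $h(z)=a_0+a_nz^n+a_{2n}z^{2n}+\cdots$, since every coefficient of index not divisible by $n$ is killed by the averaging. Composing with the Möbius automorphism $\psi_{a_0}(u):=(u-a_0)/(1-\overline{a_0}\,u)$ produces a self-map $H:=\psi_{a_0}\circ h$ of $\mathbb{D}$ that vanishes to order at least $n$ at the origin, with leading coefficient $a_n/(1-|a_0|^2)$. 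Applying the Schwarz lemma to $H(z)/z^n$ then yields $|a_n|/(1-|a_0|^2)\leq 1$.

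For the derivative estimate, let $F(w):=f(\phi_z(w))$ where $\phi_z(w):=(w+z)/(1+\bar z w)$ is the disk automorphism with $\phi_z(0)=z$. Then $F$ is a self-map of $\overline{\mathbb{D}}$ with $F(0)=f(z)$, so the Wiener inequality applied to $F$ gives $|F^{(k)}(0)|/k!\leq 1-|f(z)|^2$ for every $k\geq 1$. Next, I would apply the Cauchy integral formula on the contour $\phi_z(\{|w|=s\})$ (a simple closed curve in $\mathbb{D}$ encircling $z$) and change variables via $\zeta=\phi_z(w)$. Using $\zeta-z=w(1-|z|^2)/(1+\bar z w)$ and $d\zeta=(1-|z|^2)(1+\bar z w)^{-2}\,dw$, the Cauchy kernel transforms as
\[
\frac{f(\zeta)\,d\zeta}{(\zeta-z)^{n+1}}=\frac{F(w)(1+\bar z w)^{n-1}}{w^{n+1}(1-|z|^2)^{n}}\,dw,
\]
so reading off the coefficient of $w^n$ in $F(w)(1+\bar z w)^{n-1}$ via the binomial theorem yields
\[
\frac{f^{(n)}(z)}{n!}=\frac{1}{(1-|z|^2)^{n}}\sum_{k=1}^{n}\binom{n-1}{n-k}\bar z^{\,n-k}\frac{F^{(k)}(0)}{k!}.
\]

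Taking absolute values, inserting the Wiener bound on each $|F^{(k)}(0)|/k!$, and summing via the identity $\sum_{j=0}^{n-1}\binom{n-1}{j}|z|^j=(1+|z|)^{n-1}$ gives
\[
\frac{|f^{(n)}(z)|}{n!}\leq\frac{(1-|f(z)|^2)(1+|z|)^{n-1}}{(1-|z|^2)^{n}}=\frac{1-|f(z)|^2}{(1-|z|)^{n-1}(1-|z|^2)},
\]
the last equality using the factorization $(1-|z|^2)^n=(1-|z|)^n(1+|z|)^n$. The principal technical step is the bookkeeping in the change of variables for the Cauchy integral and the correct identification of the coefficient of $w^n$ in the product $F(w)(1+\bar z w)^{n-1}$; all remaining ingredients reduce to the classical Schwarz lemma and elementary binomial algebra.
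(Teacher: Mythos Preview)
The paper does not supply its own proof of this lemma; it is quoted from \cite{DF2008} and used as a black box. So there is no in-paper argument to compare against.

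Your proof is correct. The Wiener bound $|a_n|\le 1-|a_0|^2$ via root-of-unity averaging followed by composition with the M\"obius map $\psi_{a_0}$ and the generalized Schwarz lemma is a standard and clean derivation. The derivative estimate is also handled correctly: the change of variables $\zeta=\phi_z(w)$ in the Cauchy integral is computed accurately, yielding
\[
\frac{f^{(n)}(z)}{n!}=\frac{1}{(1-|z|^2)^{n}}\sum_{k=1}^{n}\binom{n-1}{n-k}\bar z^{\,n-k}\,\frac{F^{(k)}(0)}{k!},
\]
and since only the Taylor coefficients $b_1,\dots,b_n$ of $F$ appear (the polynomial $(1+\bar z w)^{n-1}$ has degree $n-1$, so $b_0=F(0)$ cannot contribute to the $w^n$ coefficient), the Wiener bound $|b_k|\le 1-|f(z)|^2$ applies to every term. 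The binomial summation and the final simplification $(1-|z|^2)^n=(1-|z|)^{n-1}(1-|z|^2)(1+|z|)^{n-1}$ are straightforward.

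This is essentially the argument given in the original source \cite{DF2008}, so your reconstruction matches the intended proof. One minor point of presentation: when you invoke the Cauchy formula you phrase it as integrating over $\phi_z(\{|w|=s\})$ and then substituting; it would be cleaner to simply say that $f^{(n)}(z)/n!$ is the coefficient of $w^n$ in the Taylor expansion of $w\mapsto F(w)(1+\bar z w)^{n-1}/(1-|z|^2)^n$ at $w=0$, which follows immediately from the identity $f(\zeta)=F(w)$ with $\zeta-z=w(1-|z|^2)/(1+\bar z w)$ after expanding in powers of $\zeta-z$. But this is cosmetic; the mathematics is sound.
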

\begin{lem}\cite{LLP2021}\label{lem5} Suppose $f$ is analytic in $\mathbb{D}$ with $|f(z)|\leq1$, then for any $N\in\mathbb{N}$, the following inequality holds:
\beas\sum_{n=N}^\infty |a_n| r^n+\text{sgn}(t)\sum_{n=1}^t |a_n|^2 \frac{ r^N}{1- r}+\left(\frac{1}{1+|a_0|}+\frac{ r}{1- r}\right)\sum_{n=t+1}^\infty |a_n|^2  r^{2n}\leq \frac{(1-|a_0|^2) r^N}{1- r}\eeas
for $ r\in[0,1)$, where $t=\lfloor (N-1)/2\rfloor$ and $\lfloor x\rfloor$ denotes the largest integer not exceeding the real number $x$.
\end{lem}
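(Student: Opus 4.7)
The plan is to combine the Schwarz--Pick coefficient bound $|a_n|\le 1-|a_0|^2$ from Lemma~\ref{lem2} with two algebraic consequences of the factorization $1-|a_0|^2=(1-|a_0|)(1+|a_0|)$, and to exploit the index-split at $t=\lfloor(N-1)/2\rfloor$ to align powers of $r$.

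The first observation is that Lemma~\ref{lem2} alone gives
\[
\sum_{n=N}^{\infty}|a_n|r^n\le(1-|a_0|^2)\sum_{n=N}^{\infty}r^n=\frac{(1-|a_0|^2)r^N}{1-r},
\]
which already equals the right-hand side of the desired inequality. Hence the entire content of the lemma is to absorb the two remaining quadratic sums into the deficit $\sum_{n=N}^{\infty}[(1-|a_0|^2)-|a_n|]\,r^n\ge 0$. Since equality is attained for the M\"obius map $f(z)=(a_0+z)/(1+\bar a_0 z)$ (whose coefficients satisfy $|a_n|=(1-|a_0|^2)|a_0|^{n-1}$), the crude pointwise bound $|a_n|^2\le(1-|a_0|^2)|a_n|$ is strict for $n\ge 2$, and the proof must combine several estimates rather than use the Schwarz--Pick bound alone.

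My plan is therefore to linearize both quadratic sums by the two consequences
\[
|a_n|^2\le(1-|a_0|^2)|a_n|\quad\text{and}\quad\frac{|a_n|^2}{1+|a_0|}\le(1-|a_0|)\,|a_n|,
\]
to combine the two pieces of the prefactor via the identity $\frac{1}{1+|a_0|}+\frac{r}{1-r}=\frac{1+|a_0|r}{(1+|a_0|)(1-r)}$, and then to expand $r/(1-r)=\sum_{k\ge 1}r^k$, swap the order of summation, and regroup the whole left-hand side into a single series in $r^m$. The choice $t=\lfloor(N-1)/2\rfloor$ is exactly what ensures that for $n\ge t+1$ one has $2n\ge N$, so the smallest power of $r$ produced in the regrouping is $r^N$, matching the deficit series; and the residual low-index block $\sum_{n=1}^{t}|a_n|^2\cdot r^N/(1-r)$ is handled by applying Lemma~\ref{lem2} once more to convert $|a_n|^2$ into $(1-|a_0|^2)|a_n|$ and then comparing term-by-term with the deficit weights.

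The main obstacle is that the inequality is sharp on the M\"obius extremizer above, so no step of the argument can afford a lossy estimate. In particular, for even $N$ the critical boundary case $2(t+1)=N$ arises, where the leading term of the third sum already carries the weight $r^N$ with no factor of $r$ left to help; at this endpoint the refined bound $|a_n|^2/(1+|a_0|)\le(1-|a_0|)|a_n|$ must be used, and the resulting factor $(1-|a_0|)$ must cancel precisely against the $(1+|a_0|)$ in $1-|a_0|^2=(1-|a_0|)(1+|a_0|)$. Carrying out this tight book-keeping without any loss, and verifying that the re-indexed double sum is pointwise dominated by the deficit coefficient-by-coefficient through the choice of $t$, is the delicate technical step of the proof.
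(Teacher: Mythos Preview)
The paper does not prove this lemma at all; it is quoted verbatim from \cite{LLP2021} and used as a black box. So there is no ``paper's own proof'' to compare against.

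That said, your plan has a genuine gap. After you expand $\frac{r}{1-r}=\sum_{j\ge 1}r^j$ and collect coefficients of $r^m$ (take $N=2$, $t=0$ for concreteness), the inequality you must verify pointwise is
\[
|a_m|+\sum_{n=1}^{\lfloor (m-1)/2\rfloor}|a_n|^2+\frac{|a_{m/2}|^2}{1+|a_0|}\,[m\text{ even}]\ \le\ 1-|a_0|^2 \qquad (m\ge 2).
\]
Already at $m=2$ this reads $|a_2|+\dfrac{|a_1|^2}{1+|a_0|}\le 1-|a_0|^2$. Your proposed linearizations $|a_n|^2\le(1-|a_0|^2)|a_n|$ and $|a_n|^2/(1+|a_0|)\le(1-|a_0|)|a_n|$, combined with the Schwarz--Pick bound $|a_n|\le 1-|a_0|^2$ from Lemma~\ref{lem2}, only give
\[
|a_2|+(1-|a_0|)|a_1|\ \le\ (1-|a_0|^2)+(1-|a_0|)(1-|a_0|^2)=(1-|a_0|^2)(2-|a_0|),
\]
which is strictly larger than $1-|a_0|^2$. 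So the ``tight book-keeping'' you defer to the end cannot be completed with the tools you listed: the information $|a_n|\le 1-|a_0|^2$ alone is too weak, and the deficit $\sum_{n\ge N}[(1-|a_0|^2)-|a_n|]r^n$ does not control the low-index squares $|a_1|^2,\dots,|a_t|^2$.

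What is actually needed (and what the proof in \cite{LLP2021} uses) is an \emph{iterated} Schwarz--Pick / Schur-type bound, e.g.\ for $m=2$ one applies Lemma~\ref{lem2} to $g(z)=\dfrac{f(z)-a_0}{z(1-\overline{a_0}f(z))}$ to obtain $|a_2|(1-|a_0|^2)+|a_0|\,|a_1|^2\ge |a_2(1-|a_0|^2)+\overline{a_0}a_1^2|$ together with $|g'(0)|\le 1-|g(0)|^2$, which yields exactly $|a_2|+\dfrac{|a_1|^2}{1+|a_0|}\le 1-|a_0|^2$. The general $m$ requires the analogous higher-order refinement. Your outline is missing this ingredient.
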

\section{Main Results}
In the following result, we establish the sharp refined version of the Bohr inequality involving several Schwarz functions.
\begin{theo}\label{Th1}
Suppose that $f(z)=\sum_{n=0}^\infty a_n z^n$ is analytic in $\D$ with $|f(z)|\leq 1$ in $\D$ and $\omega_k\in\mathcal{B}_k$ for $k\geq 1$. Then
\beas&& |f(\omega_m(z))|+|\omega_p(z)||f'(\omega_m(z))|+\sum_{n=2}^\infty |a_n||\omega_k(z)|^n\\
&&+\left(\frac{1}{1+|a_0|}+\frac{|\omega_k(z)|}{1-|\omega_k(z)|}\right)\sum_{n=1}^\infty |a_n|^2 |\omega_k(z)|^{2n}\leq 1\eeas 
for $r\leq R_{m,p,k}\leq r_{m,p}$, where $r_{m,p}\in(0,1)$ is the unique root of the equation $r^{2m}+2r^p-1=0$ and $R_{m,p,k}\in(0,1)$ is the smallest positive root of the equation 
\beas \frac{2r^p}{1+r^m}+\frac{2 r^{2k}(1+r^m)}{1- r^k} -(1-r^m)=0.\eeas
The number $R_{m,p,k}$ cannot be improved.
\end{theo}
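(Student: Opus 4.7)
The plan is to reduce the left-hand side to an explicit function $F(a,r)$ of $a:=|a_0|$ and $r$ via the three preparatory lemmas, and then to show that the inequality $F(a,r)\leq 1$ collapses, in the extremal regime $a\to 1^-$, to exactly the defining equation for $R_{m,p,k}$.

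First, the classical Schwarz lemma applied to $\omega_j(z)/z^j$ gives $|\omega_j(z)|\leq r^j$ for $j\in\{m,p,k\}$. Pick's invariant form (Lemma \ref{lem1}) yields $|f(\omega_m(z))|\leq B:=(a+|\omega_m(z)|)/(1+a|\omega_m(z)|)$, and the Schwarz--Pick derivative estimate (Lemma \ref{lem2}) gives $|f'(\omega_m(z))|\leq(1-|f(\omega_m(z))|^2)/(1-|\omega_m(z)|^2)$. The quantity $y+|\omega_p(z)|(1-y^2)/(1-|\omega_m(z)|^2)$ is concave in $y$ with unique maximizer $y^\ast=(1-|\omega_m(z)|^2)/(2|\omega_p(z)|)$; I would verify $B\leq y^\ast$ using the hypothesis $r\leq r_{m,p}$ (in the extremal case $a=1$, $|\omega_m(z)|=r^m$, $|\omega_p(z)|=r^p$ this reduces exactly to $r^{2m}+2r^p\leq 1$). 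Substituting $y=B$, and applying a brief monotonicity check in $|\omega_m(z)|$ (valid as $r^p<1/2$, guaranteed by the same hypothesis), produces
\[
|f(\omega_m(z))|+|\omega_p(z)||f'(\omega_m(z))|\leq\frac{a+r^m}{1+ar^m}+\frac{r^p(1-a^2)}{(1+ar^m)^2}.
\]

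For the remaining tail, Lemma \ref{lem5} with $N=2$ (so that $t=0$ and the middle term vanishes) applied with $r$ replaced by $|\omega_k(z)|\leq r^k$ immediately gives the bound $(1-a^2)r^{2k}/(1-r^k)$. Adding these two estimates yields $\mathrm{LHS}\leq F(a,r)$, where
\[
F(a,r):=\frac{a+r^m}{1+ar^m}+\frac{r^p(1-a^2)}{(1+ar^m)^2}+\frac{(1-a^2)r^{2k}}{1-r^k}.
\]
A direct algebraic manipulation gives $1-F(a,r)=(1-a)\,H(a,r)$ with
\[
H(a,r)=\frac{1-r^m}{1+ar^m}-(1+a)\left[\frac{r^p}{(1+ar^m)^2}+\frac{r^{2k}}{1-r^k}\right].
\]
I would verify $\partial_a H(a,r)\leq 0$ on $a\in[0,1]$ for $r\leq R_{m,p,k}$, reducing $H(a,r)\geq 0$ to $H(1,r)\geq 0$; multiplying $H(1,r)\geq 0$ by $1+r^m$ yields exactly $(1-r^m)-2r^p/(1+r^m)-2r^{2k}(1+r^m)/(1-r^k)\geq 0$, which is the defining inequality for $R_{m,p,k}$. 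The embedded comparison $R_{m,p,k}\leq r_{m,p}$ follows because at $r=r_{m,p}$ one has $2r^p=(1-r^m)(1+r^m)$, so $2r^p/(1+r^m)=1-r^m$ and the residual term $2r^{2k}(1+r^m)/(1-r^k)$ is strictly positive, forcing $R_{m,p,k}<r_{m,p}$.

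For sharpness I would take $f_a(z)=(a+z)/(1+az)$ and $\omega_j(z)=z^j$ and evaluate at $z=r>0$. A direct computation (using the identity $(1+a)^{-1}+r^k/(1-r^k)=(1+ar^k)/[(1+a)(1-r^k)]$ to collapse the Lemma \ref{lem5} coefficient sums) shows that each inequality above is attained and the LHS equals $F(a,r)$; for $r>R_{m,p,k}$ we have $H(1,r)<0$, so by continuity $F(a,r)>1$ for $a$ sufficiently close to $1$, and $R_{m,p,k}$ cannot be improved. I expect the main technical obstacle to be managing the two monotonicity arguments cleanly --- the concave-maximum condition $B\leq y^\ast$ and the sign $\partial_a H\leq 0$ --- since both depend on the bound $r\leq r_{m,p}$, which is precisely why the theorem embeds the hypothesis $R_{m,p,k}\leq r_{m,p}$.
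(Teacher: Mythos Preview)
Your proposal is correct and follows essentially the same route as the paper: the same three lemmas, the same two-stage reduction (first $|f(\omega_m)|,|\omega_m|,|\omega_p|$ via Schwarz--Pick and monotonicity, then the tail via Lemma~\ref{lem5} with $N=2$), the same extremal pair $f_a(z)=(a+z)/(1+az)$, $\omega_j(z)=z^j$, and the same limiting argument $a\to 1^-$ for sharpness. The one cosmetic difference is that the paper factors $F(a,r)-1=(1-a)G_1(a,r)/(1+ar^m)$ rather than your $1-F(a,r)=(1-a)H(a,r)$; by absorbing a factor of $(1+ar^m)$ into $G_1$, the paper's monotonicity check $\partial_a G_1\geq 0$ is immediate, whereas your $\partial_a H\leq 0$ (which does hold for $r\leq r_{m,p}$) takes a little more algebra to confirm.
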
 
\begin{proof}
Since $f(z)=\sum_{n=0}^\infty a_n z^n$ is analytic in $\D$ with $|f(z)|\leq 1$ in $\D$, in view of \textrm{Lemmas \ref{lem1}} and \ref{lem2}, we have $|a_n|\leq 1-|a_0|^2$ for $n\geq 1$ and 
\beas |f(z)|\leq \frac{|a_0|+|z|}{1+|a_0||z|}\quad\text{and}\quad |f'(z)|\leq \frac{1-|f(z)|^2}{1-|z|^2}.\eeas
Let $|a_0|=a\in[0,1]$. Since $\omega_m\in\mathcal{B}_m$, in view of the classical Schwarz Lemma, we have $|\omega_m(z)|\leq |z|^m$. 
Let $F_1(x)=(a+x)/(1+ax)$ and $F_2(x)=x/(1-x)$, where $0\leq x\leq x_0(\leq1)$ and $a\geq 0$. It is evident that both $F_1(x)$ and $F_2(x)$ are monotonically increasing functions of $x$, and it follows that $F_i(x)\leq F_i(x_0)$ $(i=1,2)$.
Thus
\bea\label{f3} |f(\omega_m(z))|\leq \frac{a+|\omega_m(z)|}{1+a|\omega_m(z)|}\leq \frac{a+r^m}{1+ar^m},\quad|z|=r<1.\eea
In view of \textrm{Lemma \ref{lem5}}, we have
\bea\label{f1} \sum_{n=2}^\infty |a_n| r^n+\left(\frac{1}{1+a}+\frac{r}{1- r}\right)\sum_{n=1}^\infty |a_n|^2  r^{2n}\leq \frac{(1-a^2) r^2}{1- r}.\eea
Using (\ref{f1}), we have
\bea\label{f4} &&\sum_{n=2}^\infty |a_n||\omega_k(z)|^n+\left(\frac{1}{1+|a_0|}+\frac{|\omega_k(z)|}{1-|\omega_k(z)|}\right)\sum_{n=1}^\infty |a_n|^2 |\omega_k(z)|^{2n}\nonumber\\
&\leq&\sum_{n=2}^\infty |a_n|r^{nk}+\left(\frac{1}{1+a}+\frac{r^k}{1-r^k}\right)\sum_{n=1}^\infty |a_n|^2 r^{2kn}\leq\frac{(1-a^2) r^{2k}}{1- r^k}.\eea
Let $G(t)=t+\alpha(1-t^2)$, where $0\leq t\leq t_0(\leq1)$ and $\alpha\geq 0$. Then, $G(t)\leq G(t_0)$ for $0\leq \alpha\leq 1/2$ (see \cite{RB2024}). Therefore
\beas && |f(\omega_m(z))|+|\omega_p(z)||f'(\omega_m(z))|+\sum_{n=2}^\infty |a_n||\omega_k(z)|^n\\
&&+\left(\frac{1}{1+|a_0|}+\frac{|\omega_k(z)|}{1-|\omega_k(z)|}\right)\sum_{n=1}^\infty |a_n|^2 |\omega_k(z)|^{2n}\\[2mm]
&\leq& |f(\omega_m(z))|+\frac{r^p}{1-r^{2m}}\left(1-|f(\omega_m(z))|^2\right)+\frac{(1-a^2) r^{2k}}{1- r^k}\\[2mm]
&\leq& \frac{a+r^m}{1+ar^m}+\frac{r^p}{1-r^{2m}}\left(1-\left(\frac{a+r^m}{1+ar^m}\right)^2\right)+\frac{(1-a^2) r^{2k}}{1- r^k}\\[2mm]
&=&1+\frac{(1-a)G_1(a,r)}{1+ar^m}\eeas
for $r\in[0,r_{m,p}]$, where $r_{m,p}\in(0,1)$ is the unique root of the equation $r^{2m}+2r^p-1=0$ and 
\beas G_1(a,r)=\frac{(1+a)r^p}{(1+ar^m)}+\frac{(1+a)(1+ar^m) r^{2k}}{1- r^k} -(1-r^m).\eeas
Differentiating $G_1(a,r)$ partially with respect to $a$, we obtain
\beas \frac{\pa }{\pa a}G_1(a,r)=\frac{r^p(1-r^m)}{(1+a r^m)^2}+\frac{r^{2 k}(1+r^m+2 a r^m)}{1 - r^k}\geq 0.\eeas
Therefore, $G_1(a,r)$ is a monotonically increasing function of $a\in[0,1]$ and hence, we have 
\beas G_1(a,r)\leq G_1(1,r)=\frac{2r^p}{1+r^m}+\frac{2 r^{2k}(1+r^m)}{1- r^k} -(1-r^m)\leq 0,\eeas
for $r\leq R_{m,p,k}$, where $R_{m,p,k}\in(0,1)$ is the smallest positive root of the equation 
\bea\label{f2} G_2(r):=\frac{2r^p}{1+r^m}+\frac{2 r^{2k}(1+r^m)}{1- r^k} -(1-r^m)=0.\eea
We claim that $R_{m,p,k}\leq r_{m,p}$. For $r>r_{m,p}$, we have $r^{2m}+2r^p-1>0$ and 
\beas \frac{2r^p}{1+r^m}+\frac{2 r^{2k}(1+r^m)}{1- r^k} -(1-r^m)=\frac{r^{2m}+2r^p-1}{1+r^m}+\frac{2 r^{2k}(1+r^m)}{1- r^k}>0.\eeas
This shows that $R_{m,p,k}\leq r_{m,p}$.\\[2mm]
\indent To prove the sharpness of the result, we consider the following functions 
\beas f_1(z)=\frac{a+z}{1+az}=A_0+\sum_{n=1}^\infty A_n z^n\quad\text{and}\quad \omega_m(z)=z^m\quad\text{for}\quad m\geq 1,\eeas
where $A_0=a\in[0,1)$, $A_n=(1-a^2)(-a)^{n-1}$ for $n\geq 1$.
Thus, for $z=r$, we have 
\beas&& |f_1(r^m)|+|r^p||f_1'(r^m)|+\sum_{n=2}^\infty |A_n||r|^{nk}+\left(\frac{1}{1+|A_0|}+\frac{|r^k|}{1-|r^k|}\right)\sum_{n=1}^\infty |A_n|^2 |r|^{2nk}\\[2mm]
&&=\frac{a+ r^m}{1+a r^m}+\frac{(1-a^2)r^{p}}{(1+a r^m)^2}+\frac{(1-a^2)a r^{2 k}}{1-a r^k}+\frac{(1-a^2)^2 r^{2k}(1+a r^k)}{(1+a)(1- r^k)}\sum_{n=1}^\infty (a r^k)^{2(n-1)}\\[2mm]
&&=\frac{a+ r^m}{1+a r^m}+\frac{(1-a^2)r^{p}}{(1+a r^m)^2}+\frac{(1-a^2)a r^{2 k}}{1-a r^k}+\frac{1+a r^k}{(1+a)(1- r^k)}\frac{(1-a^2)^2 r^{2k}}{1-a^2 r^{2k}}\\[2mm]
&&=1+(1-a)G_3(a, r),\eeas
where 
\beas G_3(a, r)=\frac{(1+a) r^{p}}{(1+a r^m)^2}+\frac{(1+a)a r^{2 k}}{1-a r^k}+\frac{(1-a^2) r^{2k}}{(1-a r^{k})(1- r^k)}-\frac{1-r^m}{1+ar^m}.\eeas
It is evident that
\beas \lim_{a\to1^-} G_3(a, r)
=\frac{2 r^{p}}{(1+ r^m)^2}+\frac{2 r^{2 k}}{1- r^k}-\frac{1-r^m}{1+r^m}>0 \eeas
for $r>R_{m,p,k}$, where $R_{m,p,k}$ is the smallest positive root of the equation (\ref{f2}) in $[0,r_{m,p}]$. This shows that the radius $R_{m,p,k}$ is the best possible. This completes the proof.
\end{proof}
\begin{rem}
We discuss some special cases of \textrm{Theorem \ref{Th1}} as well as a number of helpful observations.
\begin{enumerate}
\item[(i)] Setting $p=m$, $k=1$ and $\omega_1(z)=z$ in \textrm{Theorem \ref{Th1}} gives the first part of \textrm{Theorem 3} of \cite{HLP2020}.
\item[(ii)] Setting $p=k=1$ and $\omega_1(z)=z$ in \textrm{Theorem \ref{Th1}} gives the first part of \textrm{Theorem 4} of \cite{HLP2020}.
\item[(iii)] Setting $m=p=k=1$ and $\omega_1(z)=z$ in \textrm{Theorem \ref{Th1}} gives \textrm{Theorem A}.
\end{enumerate}
\end{rem}
\noindent In Table \ref{tab1} and Figures \ref{fig1} and \ref{fig2}, we obtain the values of $R_{m,p,k}$ and $r_{m,p}$ for certain values of $m,p,k\in\mathbb{N}$.
\begin{table}[H]
\centering
\begin{tabular}{*{5}{|c}|}
\hline
$m$ &p&k&$R_{m,p,k}$&$r_{m,p}$\\
\hline
1&1&1&$(\sqrt{17}-3)/4$&0.414214\\
\hline
3&3&1&0.441112&0.745432\\
\hline
2&3&2&0.567006&0.716673\\
\hline
5&30&10&0.88777&0.948565\\
\hline
30&20&10&0.914967&0.961223\\
\hline
\end{tabular}
\caption{$R_{m,p,k}$ is the smallest positive root of the equation (\ref{f2}) in $[0, r_{m,p}]$}
\label{tab1}\end{table}
\vspace{1cm}
\begin{figure}[h]
\begin{minipage}[c]{1.0\linewidth}
\centering
\includegraphics[scale=0.8]{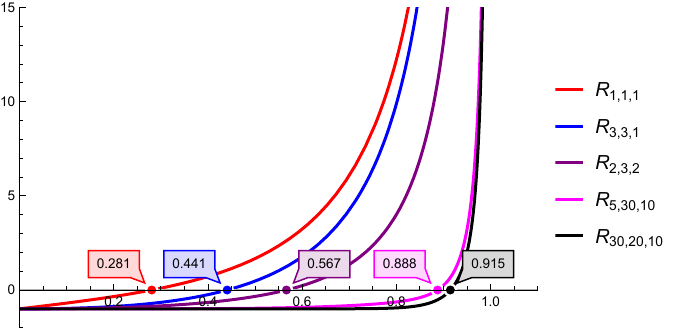}
\caption{Location of $R_{m,p,k}$ in $(0,1)$}
\label{fig1}
\end{minipage}\hspace{0.5cm}
\begin{minipage}[c]{1.0\linewidth}
\centering
\includegraphics[scale=0.8]{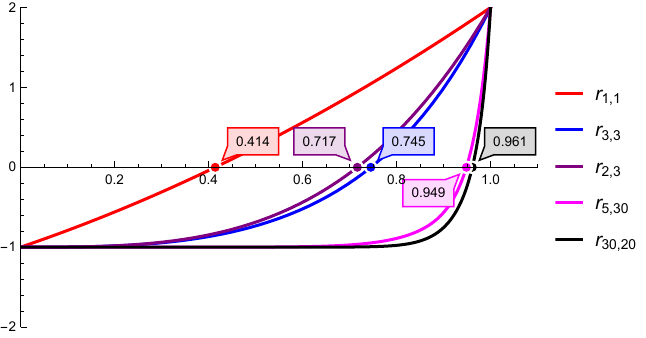}
\caption{Location of $r_{m,p}$ in $(0,1)$}
\label{fig2}
\end{minipage}
\end{figure}
In the following result, we establish the sharp refined version of the Bohr inequality in the context of several Schwarz functions.
\begin{theo}\label{Th2}
Suppose that $f(z)=\sum_{n=0}^\infty a_n z^n$ is analytic in $\D$ with $|f(z)|\leq 1$ in $\D$ and $\omega_k\in\mathcal{B}_k$ for $k\geq 1$. Then
\beas &&|f(\omega_m(z))|^2+|\omega_p(z)||f'(\omega_m(z))|+\sum_{n=2}^\infty |a_n||\omega_k(z)|^n\\&&+\left(\frac{1}{1+|a_0|}
+\frac{|\omega_k(z)|}{1-|\omega_k(z)|}\right)\sum_{n=1}^\infty |a_n|^2 |\omega_k(z)|^{2n}\leq 1\eeas 
for $r\leq R_{2,m,p,k}$, where $R_{2,m,p,k}\in(0,r_{2,m,p})$ is the smallest positive root of
the equation $r^{2k}/(1- r^k) -(1-r^{2m}-r^p)/(1+r^m)^2=0$ and $r_{2,m,p}\in(0,1)$ is the unique root of the equation $1-r^{2m}-r^p=0$. The number $R_{2,m,p,k}$ cannot be improved.
\end{theo}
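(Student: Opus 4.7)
The proof will closely parallel that of Theorem \ref{Th1}, with one crucial modification that forces the emergence of the constraint $r\leq r_{2,m,p}$. The plan is to first apply the Schwarz lemma to each $\omega_j\in\mathcal{B}_j$ to obtain $|\omega_j(z)|\leq r^j$, then use Lemma \ref{lem1} (Pick's form) to estimate $|f(\omega_m(z))|\leq (a+r^m)/(1+ar^m)$ where $a=|a_0|$, and use Lemma \ref{lem2} combined with $|\omega_p(z)|\leq r^p$ to obtain
\[
|\omega_p(z)||f'(\omega_m(z))|\;\leq\;\frac{r^p}{1-r^{2m}}\bigl(1-|f(\omega_m(z))|^2\bigr).
\]
For the sum-of-coefficients part, applying Lemma \ref{lem5} with $N=2$ to $|\omega_k(z)|\leq r^k$ yields the bound $(1-a^2)r^{2k}/(1-r^k)$, exactly as in the previous proof.

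The key new ingredient is the appearance of $|f(\omega_m(z))|^2$ rather than $|f(\omega_m(z))|$. Writing $t=|f(\omega_m(z))|$ and $\alpha=r^p/(1-r^{2m})$, the combined estimate becomes $t^2+\alpha(1-t^2)+(1-a^2)r^{2k}/(1-r^k)$, and the map $t\mapsto (1-\alpha)t^2+\alpha$ is monotonically increasing on $[0,1]$ precisely when $\alpha\leq 1$, that is, when $1-r^{2m}-r^p\geq 0$. This is exactly the defining inequality for $r_{2,m,p}$ and explains why the theorem restricts attention to that range. Substituting the Pick estimate for $t$ and using the identity
\[
1-\left(\frac{a+r^m}{1+ar^m}\right)^2=\frac{(1-a^2)(1-r^{2m})}{(1+ar^m)^2},
\]
the full expression simplifies to $1+(1-a^2)\Phi(a,r)$ with
\[
\Phi(a,r)=\frac{r^{2k}}{1-r^k}-\frac{1-r^{2m}-r^p}{(1+ar^m)^2}.
\]

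Next I would observe that $\Phi(a,r)$ is increasing in $a\in[0,1]$ because the numerator of the subtracted fraction is nonnegative on $[0,r_{2,m,p}]$, so it suffices to check the bound at $a=1$, which yields exactly the defining equation of $R_{2,m,p,k}$. This also shows, via the sign analysis at $r=0$ and at $r=r_{2,m,p}$ (where the subtracted term vanishes and $r^{2k}/(1-r^k)>0$), that $R_{2,m,p,k}\in(0,r_{2,m,p})$ exists as claimed.

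For sharpness I would mimic the construction in Theorem \ref{Th1}: take $f_1(z)=(a+z)/(1+az)$ with Taylor coefficients $A_0=a$, $A_n=(1-a^2)(-a)^{n-1}$, and $\omega_j(z)=z^j$, substitute these into the left-hand side, and after summing the geometric series express the quantity in the form $1+(1-a)\tilde G(a,r)$. Letting $a\to 1^-$ collapses the coefficient-sum contributions and produces a residual term that equals precisely $\Phi(1,r)$ up to a positive factor, which becomes strictly positive for $r>R_{2,m,p,k}$. The main obstacle is not analytical but bookkeeping: ensuring that the squared term $|f(\omega_m(z))|^2$ is handled correctly in both the upper-bound derivation (monotonicity condition $\alpha\leq 1$) and the sharpness computation, so that the extremal limit $a\to 1^-$ still produces the same threshold $R_{2,m,p,k}$; everything else is a direct transcription of the techniques used for Theorem \ref{Th1}.
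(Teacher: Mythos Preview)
Your proposal is correct and follows essentially the same route as the paper: the paper likewise reduces to $1+(1-a^2)G_4(a,r)$ with $G_4(a,r)=r^{2k}/(1-r^k)-(1-r^{2m}-r^p)/(1+ar^m)^2$ (your $\Phi$), invokes the monotonicity condition $r^p/(1-r^{2m})\leq 1$ to justify substituting the Pick bound, and for sharpness uses the same extremal $f(z)=(a+z)/(1+az)$, $\omega_j(z)=z^j$, with the limit $a\to 1^-$ recovering exactly $\Phi(1,r)$. The only cosmetic difference is that the paper factors the sharpness expression as $1+(1-a^2)G_5(a,r)$ rather than $1+(1-a)\tilde G(a,r)$, which is immaterial.
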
 
\begin{proof}Using similar argument as in the proof of \textrm{Theorem \ref{Th1}}, we obtain (\ref{f3}) and (\ref{f4}). Therefore, we have
\beas && |f(\omega_m(z))|^2+|\omega_p(z)||f'(\omega_m(z))|+\sum_{n=2}^\infty |a_n||\omega_k(z)|^n\\
&&+\left(\frac{1}{1+|a_0|}+\frac{|\omega_k(z)|}{1-|\omega_k(z)|}\right)\sum_{n=1}^\infty |a_n|^2 |\omega_k(z)|^{2n}\\[2mm]
&\leq& |f(\omega_m(z))|^2+\frac{r^p}{1-r^{2m}}\left(1-|f(\omega_m(z))|^2\right)+\frac{(1-a^2) r^{2k}}{1- r^k}\eeas
\beas&\leq&\left(1-\frac{r^p}{1-r^{2m}}\right) \left(\frac{a+r^m}{1+ar^m}\right)^2+\frac{r^p}{1-r^{2m}}+\frac{(1-a^2) r^{2k}}{1- r^k}\\[2mm]
&=&1+\frac{1-r^{2m}-r^p}{1-r^{2m}}\left( \left(\frac{a+r^m}{1+ar^m}\right)^2-1\right)+\frac{(1-a^2) r^{2k}}{1- r^k}\\[2mm]
&=&1+(1-a^2)G_4(a,r),\eeas
where the second inequality holds for such $r\in[0,1]$ satisfying $1-r^p/(1-r^{2m})\geq 0$, {\it i.e.,} for $r\in[0,r_{2,m,p}]$, where $r_{2,m,p}\in(0,1)$ is the unique root of the equation $1-r^{2m}-r^p=0$ and 
\beas G_4(a,r)=-\frac{(1-r^{2m}-r^p)}{(1+ar^m)^2}+\frac{r^{2k}}{1- r^k}.\eeas
It is evident that the function $G_4(a,r)$ is a monotonically increasing function of $a$ and it follows that 
\beas G_4(a,r)\leq G_4(1,r)=-\frac{(1-r^{2m}-r^p)}{(1+r^m)^2}+\frac{r^{2k}}{1- r^k}\leq 0\eeas
for $r\leq R_{2,m,p,k}$, where $R_{2,m,p,k}$ is the smallest positive root of the equation 
\bea\label{f5} -\frac{(1-r^{2m}-r^p)}{(1+r^m)^2}+\frac{r^{2k}}{1- r^k}=0.\eea
It is clear that $R_{2,m,p,k}\leq r_{2,m,p}$.\\[2mm] 
\indent To prove the sharpness of the result, we consider the following functions 
\beas f_2(z)=\frac{a+z}{1+az}=A_0+\sum_{n=1}^\infty A_n z^n\quad\text{and}\quad \omega_m(z)=z^m\quad\text{for}\quad m\geq 1,\eeas
where $A_0=a\in[0,1)$, $A_n=(1-a^2)(-a)^{n-1}$ for $n\geq 1$.
Thus, for $z=r$, we have 
\beas&& |f_2(r^m)|^2+|r^p||f_2'(r^m)|+\sum_{n=2}^\infty |A_n||r|^{nk}+\left(\frac{1}{1+|A_0|}+\frac{|r^k|}{1-|r^k|}\right)\sum_{n=1}^\infty |A_n|^2 |r|^{2nk}\\[2mm]
&=&\left(\frac{a+ r^m}{1+a r^m}\right)^2+\frac{(1-a^2)r^{p}}{(1+a r^m)^2}+\frac{(1-a^2)a r^{2 k}}{1-a r^k}+\frac{(1-a^2)^2 r^{2k}(1+a r^k)}{(1+a)(1- r^k)}\sum_{n=1}^\infty (a r^k)^{2(n-1)}\\[2mm]
&=&1-\frac{(1-a^2)(1-r^{2m})}{(1+a r^m)^2}+\frac{(1-a^2)r^{p}}{(1+a r^m)^2}+\frac{(1-a^2)a r^{2 k}}{1-a r^k}+\frac{(1+a r^k)}{(1+a)(1- r^k)}\frac{(1-a^2)^2 r^{2k}}{1-a^2 r^{2k}}\\[2mm]
&=&1+(1-a^2)G_5(a, r),\eeas
where 
\beas G_5(a, r)=-\frac{(1-r^{2m})}{(1+a r^m)^2}+\frac{r^{p}}{(1+a r^m)^2}+\frac{a r^{2 k}}{1-a r^k}+\frac{(1+a r^k)}{(1+a)(1- r^k)}\frac{(1-a^2) r^{2k}}{1-a^2 r^{2k}}.\eeas
It is evident that
\beas \lim_{a\to1^-} G_5(a, r)
=\frac{r^{2m}+r^{p}-1}{(1+r^m)^2}+\frac{r^{2 k}}{1-r^k}>0 \eeas
for $r>R_{2,m,p,k}$, where $R_{2,m,p,k}$ is the smallest positive root of the equation (\ref{f5}) in $[0,r_{2,m,p}]$. This shows that the radius $R_{2,m,p,k}$ is the best possible. This completes the proof.
\end{proof}
\begin{rem}
We discuss some special cases of \textrm{Theorem \ref{Th2}} as well as a number of helpful observations.
\begin{enumerate}
\item[(i)] Setting $p=m$, $k=1$ and $\omega_1(z)=z$ in \textrm{Theorem \ref{Th2}} gives the second part \textrm{Theorem 3} of \cite{HLP2020}.
\item[(ii)] Setting $p=k=1$ and $\omega_1(z)=z$ in \textrm{Theorem \ref{Th2}} gives the second part of \textrm{Theorem 4} of \cite{HLP2020}.
\item[(iii)] Setting $m=p=k=1$ and $\omega_1(z)=z$ in \textrm{Theorem \ref{Th2}} gives \textrm{Theorem C}.
\end{enumerate}
\end{rem}
\noindent In Table \ref{tab2} and Figures \ref{fig3} and \ref{fig4}, we obtain the values of $R_{2,m,p,k}$ and $r_{2,m,p}$ for certain values of $m,p,k\in\mathbb{N}$.
\begin{table}[H]
\centering
\begin{tabular}{*{5}{|c}|}
\hline
$m$ &p&k&$R_{2,m,p,k}$&$r_{2,m,p}$\\
\hline
1&1&1&$0.385795$&0.618034\\
\hline
3&3&1&0.535687&0.8518\\
\hline
2&3&2&0.640675&0.819173\\
\hline
5&30&10&0.906065&0.962497\\
\hline
30&20&10&0.936066&0.981069\\
\hline
\end{tabular}
\caption{$R_{2,m,p,k}$ is the smallest positive root of the equation (\ref{f5}) in $[0, r_{2,m,p}]$}
\label{tab2}\end{table}
\begin{figure}[H]
\begin{minipage}[c]{1\linewidth}
\centering
\includegraphics[scale=0.8]{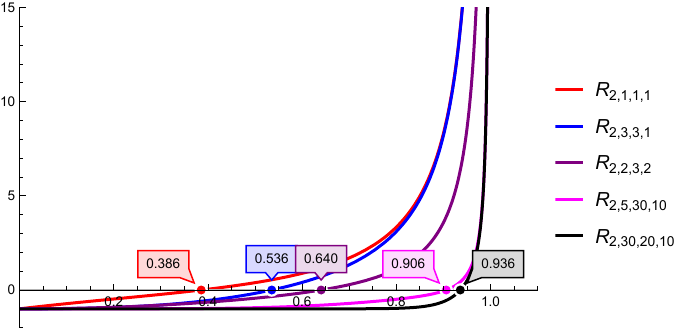}
\caption{Location of $R_{2,m,p,k}$ in $(0,1)$}
\label{fig3}
\end{minipage}\hspace{0.5cm}
\begin{minipage}[c]{1\linewidth}
\centering
\includegraphics[scale=0.8]{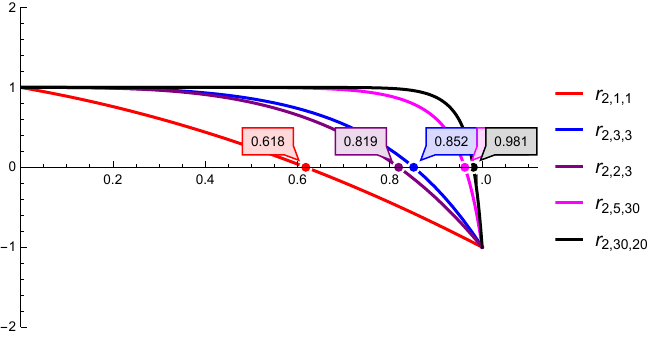}
\caption{Location of $r_{2,m,p}$ in $(0,1)$}
\label{fig4}
\end{minipage}
\end{figure}
\begin{theo}\label{Th6}
Suppose that $f(z)=\sum_{n=0}^\infty a_n z^n$ is analytic in $\D$ with $|f(z)|\leq 1$ in $\D$ and $\omega_k\in\mathcal{B}_k$ for $k\geq 1$. Then
\beas &&\sum_{n=0}^\infty |a_n||\omega_k(z)|^n+\left(\frac{1}{1+|a_0|}+\frac{|\omega_k(z)|}{1-|\omega_k(z)|}\right)\sum_{n=1}^\infty |a_n|^2 |\omega_k(z)|^{2n}\\
&&+\frac{(1-r_k^{2m})^2}{8r_k^{2m}}\sum_{n=1}^\infty n|a_n|^2 |\omega_m(z)|^{2n}\leq 1\eeas 
for $r\leq r_{k}$, where $r_{k}\in(0,1)$ is the unique root of the equation $3r^k-1=0$.
The numbers $r_{k}$ and $(1-r_k^{2m})^2/(8r_k^{2m})$ cannot be improved.
\end{theo}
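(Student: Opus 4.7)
My plan is to dominate the left-hand side by a function of the single variable $r$, first via the comparison $|\omega_j(z)|\le r^j$ and then via explicit bounds on each of the three sums: Lemma \ref{lem5} for the two $|a_n|$ and $|a_n|^2$ sums, and Lemma \ref{lem2} together with $\sum_{n\ge 1}nx^n=x/(1-x)^2$ for the area-type sum. Because $\omega_k\in\mathcal{B}_k$ and $\omega_m\in\mathcal{B}_m$, Schwarz's lemma yields $|\omega_k(z)|\le r^k$ and $|\omega_m(z)|\le r^m$; every summand on the left is nonnegative and nondecreasing in the corresponding $|\omega_j(z)|$ (for the middle bracket, $y\mapsto 1/(1+a)+y/(1-y)$ is increasing and multiplies the nonnegative increasing series $\sum |a_n|^2 y^{2n}$), so it suffices to prove the inequality after replacing $|\omega_k(z)|$ by $r^k$ and $|\omega_m(z)|$ by $r^m$, writing $a:=|a_0|$.

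Applying Lemma \ref{lem5} with $N=1$ (so $t=0$ and the middle sum disappears) and the variable $r$ replaced by $r^k$ gives
\[\sum_{n=1}^{\infty}|a_n|r^{kn}+\left(\frac{1}{1+a}+\frac{r^k}{1-r^k}\right)\sum_{n=1}^{\infty}|a_n|^2 r^{2kn}\le \frac{(1-a^2)r^k}{1-r^k},\]
while the coefficient bound $|a_n|\le 1-a^2$ from Lemma \ref{lem2} together with $\sum_{n\ge 1}nx^n=x/(1-x)^2$ at $x=r^{2m}$ gives
\[\sum_{n=1}^{\infty}n|a_n|^2 r^{2mn}\le\frac{(1-a^2)^2 r^{2m}}{(1-r^{2m})^2}.\]
Adding $|a_0|=a$, the whole LHS is bounded above by
\[a+\frac{(1-a^2)r^k}{1-r^k}+\frac{(1-r_k^{2m})^2}{8r_k^{2m}}\cdot\frac{(1-a^2)^2 r^{2m}}{(1-r^{2m})^2}.\]
Since both $r^k/(1-r^k)$ and $r^{2m}/(1-r^{2m})^2$ are increasing in $r$, this majorant is maximized on $[0,r_k]$ at $r=r_k$; using $r_k^k=1/3$ the last factor collapses to $1/8$, and the inequality reduces to $a+(1-a^2)/2+(1-a^2)^2/8\le 1$, which is the elementary factorization
\[1-a-\frac{1-a^2}{2}-\frac{(1-a^2)^2}{8}=\frac{(1-a)^3(a+3)}{8}\ge 0.\]

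For sharpness I would test the standard extremal family $f(z)=(a+z)/(1+az)$ with $\omega_j(z)=z^j$; geometric summation yields an explicit formula for the LHS at $z=r$. Setting $a=1-\epsilon$ and expanding, the $O(\epsilon)$ coefficient equals $(3r^k-1)/(1-r^k)$, strictly positive for $r>r_k$, so the LHS exceeds $1$ for small $\epsilon$ whenever $r>r_k$; this rules out any larger radius. At $r=r_k$ the $O(\epsilon)$ term vanishes and the three $O(\epsilon^2)$ contributions, namely $-\epsilon^2$ from the linear-in-$a$ block, $+\epsilon^2/2$ from the middle bracket, and $+\epsilon^2/2$ from the area term, cancel precisely when the area coefficient equals $(1-r_k^{2m})^2/(8r_k^{2m})$; any strictly larger constant creates a positive $O(\epsilon^2)$ remainder and breaks the inequality. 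The main obstacle I anticipate is the simultaneous second-order bookkeeping at $r=r_k$ that locks both sharp constants together; once that algebraic balance is established, no new analytic input is required beyond the already-cited Lemmas \ref{lem2} and \ref{lem5}.
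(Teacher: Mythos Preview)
Your proof is correct and follows essentially the same route as the paper's: Schwarz's lemma to replace $|\omega_j(z)|$ by $r^j$, Lemma~\ref{lem5} with $N=1$ for the first two sums, Lemma~\ref{lem2} for the area term, monotonicity in $r$ to reduce to $r=r_k$, and the extremal family $f(z)=(a+z)/(1+az)$, $\omega_j(z)=z^j$ for sharpness. Your direct factorization $1-a-(1-a^2)/2-(1-a^2)^2/8=(1-a)^3(a+3)/8\ge 0$ and the $\epsilon$-expansion for both sharp constants are slightly cleaner than the paper's monotonicity arguments for the auxiliary functions $G_7$ and $G_8$, but the underlying strategy is identical.
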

\begin{proof} Using similar argument as in the proof of \textrm{Theorems \ref{Th1}} and in view of \textrm{Lemmas \ref{lem2}} and \ref{lem5}, we have $|a_n|\leq 1-a^2$ for $n\geq 1$ and
\beas  \sum_{n=1}^\infty |a_n||\omega_k(z)|^n+\left(\frac{1}{1+|a_0|}+\frac{|\omega_k(z)|}{1-|\omega_k(z)|}\right)\sum_{n=1}^\infty |a_n|^2 |\omega_k(z)|^{2n}\leq \frac{(1-a^2) r^k}{1- r^k},\eeas
where $|a_0|=a\in[0, 1]$.
Therefore,
\beas&& \sum_{n=0}^\infty |a_n||\omega_k(z)|^n+\left(\frac{1}{1+|a_0|}
+\frac{|\omega_k(z)|}{1-|\omega_k(z)|}\right)\sum_{n=1}^\infty |a_n|^2 |\omega_k(z)|^{2n}+\lambda\sum_{n=1}^\infty n|a_n|^2 |\omega_m(z)|^{2n}\\[2mm]
&\leq& a+\frac{(1-a^2) r^k}{1- r^k}+\lambda(1-a^2)^2 \sum_{n=1}^\infty n r^{2mn}\\
&\leq& a+\frac{(1-a^2) r^k}{1- r^k}+\lambda(1-a^2)^2\frac{r^{2m}}{(1-r^{2m})^2}=1+G_6(a,r),\eeas
where 
\beas G_6(a,r)=\frac{(1-a^2)}{2}\left(1+\left(\frac{2r^k}{1-r^k}-1\right)+2\lambda(1-a^2)\frac{r^{2m}}{(1-r^{2m})^2}-\frac{2}{1+a}\right).\eeas
Differentiating partially $G_6(a,r)$ with respect to $r$, we obtain
\beas \frac{\pa}{\pa r}G_6(a,r)=\frac{(1-a^2) k r^{k-1}}{(1- r^k)^2}+\lambda(1-a^2)^2\frac{2 m r^{2m-1}(1+r^{2m})}{(1-r^{2m})^3}\geq 0.\eeas
Therefore, $G_6(a,r)$ is a monotonically increasing function of $r$ in $[0, 1]$ and it follows that $G_6(a,r)\leq G_6(a,r_k)$ for $r\leq r_k$, where $r_k\in(0, 1)$ is the unique positive root of the equation $3r^k-1=0$.
Therefore, $G_6(a,r_k)=(1-a^2)G_7(a)/2$, where
\beas G_7(a)=1+2\lambda(1-a^2)\frac{r_k^{2m}}{(1-r_k^{2m})^2}-\frac{2}{1+a}.\eeas
Note that 
\beas G_7(0)=2\lambda\frac{r_k^{2m}}{(1-r_k^{2m})^2}-1\quad\text{and}\quad G_7(1)=0.\eeas
Differentiating $G_7(a)$ with respect to $a$, we obtain
\beas G_7'(a)=\frac{2}{(1+a)^2}\left(1-2\lambda \frac{r_k^{2m}}{(1-r_k^{2m})^2}a(1+a)^2\right)
\geq \frac{2}{(1+a)^2}\left(1-\frac{8\lambda r_k^{2m}}{(1-r_k^{2m})^2}\right)\geq 0\eeas
for $\lambda\leq (1-r_k^{2m})^2/(8r_k^{2m})$. Therefore, $G_7(a)$ is a monotonically increasing function of $a$, whenever $\lambda\leq (1-r_k^{2m})^2/(8r_k^{2m})$ and it follows that $G_7(a)\leq 0$ for $a\in[0,1]$. Therefore,
\beas&& \sum_{n=0}^\infty |a_n||\omega_k(z)|^n+\left(\frac{1}{1+|a_0|}+\frac{|\omega_k(z)|}{1-|\omega_k(z)|}\right)\sum_{n=1}^\infty |a_n|^2 |\omega_k(z)|^{2n}\\
&&+\frac{(1-r_k^{2m})^2}{8r_k^{2m}}\sum_{n=1}^\infty n|a_n|^2 |\omega_m(z)|^{2n}\leq 1\eeas
for $r\leq r_k$, where $r_k\in(0, 1)$ is the unique positive root of the equation $3r^k-1=0$.\\[2mm]
\indent To prove the sharpness of the result, we consider the following functions 
\beas f_3(z)=\frac{a+z}{1+az}=A_0+\sum_{n=1}^\infty A_n z^n\quad\text{and}\quad \omega_m(z)=z^m\quad\text{for}\quad m\geq 1,\eeas
where $A_0=a\in[0,1)$, $A_n=(1-a^2)(-a)^{n-1}$ for $n\geq 1$.
Thus, we have 
\beas&&\sum_{n=0}^\infty |A_n||r|^{nk}+\left(\frac{1}{1+|A_0|}+\frac{|r^k|}{1-|r^k|}\right)\sum_{n=1}^\infty |A_n|^2 |r|^{2nk}\\
&&+\frac{(1-r_k^{2m})^2}{8r_k^{2m}}\sum_{n=1}^\infty n|A_n|^2|r^m|^{2n}\\[2mm]
&=&a+\frac{(1-a^2)r^{ k}}{1-a r^k}+\frac{(1-a^2)^2 r^{2k}(1+a r^k)}{(1+a)(1- r^k)}\sum_{n=1}^\infty (a r^k)^{2(n-1)}\\
&&+\frac{(1-r_k^{2m})^2}{8r_k^{2m}}(1-a^2)^2r^{2m}\sum_{n=1}^\infty n (ar^{m})^{2(n-1)} \\[2mm]
&=&a+\frac{(1-a^2) r^{ k}}{1-a r^k}+\frac{1+a r^k}{(1+a)(1- r^k)}\frac{(1-a^2)^2 r^{2k}}{1-a^2 r^{2k}}+\frac{(1-r_k^{2m})^2}{8r_k^{2m}}\frac{(1-a^2)^2r^{2m}}{(1-a^2r^{2m})^2}\\[2mm]
&=&1+(1-a)G_8(a, r),\eeas
where 
\beas G_8(a, r)=\frac{(1+a) r^{ k}}{1-a r^k}+\frac{(1-a^2) r^{2k}}{(1- r^k)(1-a r^{k})}+\frac{(1-a^2)(1+a)(1-r_k^{2m})^2r^{2m}}{8r_k^{2m}(1-a^2r^{2m})^2}-1.\eeas
Differentiating partially $G_8(a, r)$ with respect to $r$, we obtain
\beas&& \frac{\pa}{\pa r} G_8(a, r)=\frac{(1+a) k r^{k-1}}{(1-a r^k)^2}+\frac{(1-a^2)(1+a)(1-r_k^{2m})^2}{8r_k^{2m}}\frac{2 m r^{2 m-1} \left(a^2 r^{2 m}+1\right)}{\left(1-a^2 r^{2 m}\right)^3}\\
&&+(1-a^2)\left(\frac{2kr^{2k-1}}{(1- r^k)(1-a r^{k})}+\frac{k r^{3k-1}}{\left(1-r^k\right)^2(1-a r^{k})}+\frac{a k r^{3k-1}}{(1- r^k)\left(1-a r^k\right)^2}\right)>0\eeas
for $r\in(0,1)$. Therefore, $G_8(a, r)$ is a strictly increasing function of $r\in(0,1)$. For $r>r_k$, we have $G_8(a, r)>G_8(a, r_k)$. It is evident that
\beas \lim_{a\to1^-}G_8(a,r_k)=\frac{2r_k^{ k}}{1-r_k^k}-1=0.\eeas
This shows that the numbers $r_k$ and $(1-r_k^{2m})^2/(8r_k^{2m})$ are the best possible. This completes the proof.
\end{proof}
\begin{rem}
Setting $k=m=1$ and $\omega_1(z)=z$ in \textrm{Theorem \ref{Th6}} gives \textrm{Theorem C}. In this case $r_k=1/3$ and $(1-r_k^{2m})^2/(8r_k^{2m})=8/9$. An observation shows that $(1-r_k^{2m})^2/(8r_k^{2m})=8/9$, whenever $k=m$.
\end{rem}
\noindent In \textrm{Table \ref{tab3}}, we obtain the values of $r_k$ and $R_{m}(r_k):=(1-r_k^{2m})^2/(8r_k^{2m})$ for certain values of $m,k\in\mathbb{N}$.
\begin{table}[H]
\centering
\begin{tabular}{*{4}{|c}|}
\hline
$m$&k&$r_k$&$R_{m}(r_k)$\\
\hline
2&1&1/3&800/81\\
\hline
1&2&$1/\sqrt{3}$&$1/6$\\
\hline
2&2&$1/\sqrt{3}$&8/9\\
\hline
1&3&$1/\sqrt[3]{3}$&$\left(3-\sqrt[3]{3}\right)^2/(24 \sqrt[3]{3})$\\
\hline
2&3&$1/\sqrt[3]{3}$&$\left(9-3^{2/3}\right)^2/(72\times 3^{2/3})$\\
\hline
\end{tabular}
\caption{$r_k\in(0,1)$ is the unique positive root of the equation $3r^k-1=0$}
\label{tab3}\end{table}
In the following, we deduce the sharp improved version of the Bohr inequality involving two Schwarz functions.
\begin{theo}\label{Th3}
Suppose that $f(z)=\sum_{n=0}^\infty a_n z^n$ is analytic in $\D$ with $|f(z)|\leq 1$ in $\D$ and $\omega_k\in\mathcal{B}_k$ for $k\geq 1$. Then
\beas   |f(\omega_m(z))|+\sum_{n=2}^\infty \left|\frac{f^{(n)}(\omega_m(z))}{n!}\right||\omega_k(z)|^n\leq 1\eeas 
for $r\leq R_{3,m,k}$, where $R_{3,m,k}\in(0,1)$ is the smallest positive root of the equation 
\beas 2r^{2k}-(1-r^{2m})(1-r^k-r^m)=0.\eeas
The number $R_{3,m,k}$ cannot be improved.
\end{theo}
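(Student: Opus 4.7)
The plan is to combine Pick's form of the Schwarz lemma (Lemma \ref{lem1}) with the higher-derivative estimate of Lemma \ref{lem2}, sum the resulting geometric series, and reduce to a single-variable optimization in $a=|a_0|$, very much in the spirit of the proof of Theorem \ref{Th1}. Writing $w=\omega_m(z)$, $\rho=|w|\le r^m$ and $\sigma=|\omega_k(z)|\le r^k$, Lemma \ref{lem2} applied term by term gives
\[
\sum_{n=2}^\infty \frac{|f^{(n)}(w)|}{n!}\,\sigma^n \;\le\; \frac{1-|f(w)|^2}{1-\rho^2}\sum_{n=2}^\infty \frac{\sigma^n}{(1-\rho)^{n-1}} \;=\; \frac{(1-|f(w)|^2)\,\sigma^2}{(1-\rho^2)(1-\rho-\sigma)},
\]
which is valid as long as $\rho+\sigma<1$. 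Thus the whole left-hand side of the theorem is majorised by $G(t):=t+\alpha(1-t^2)$ with $t=|f(w)|$ and $\alpha=\sigma^2/[(1-\rho^2)(1-\rho-\sigma)]$.

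Next, invoking the monotonicity fact from \cite{RB2024} (already used in Theorem \ref{Th1}) that $G$ is nondecreasing on $[0,1]$ when $\alpha\le 1/2$, together with the monotonicity of the Pick bound $(a+\rho)/(1+a\rho)$ in $\rho$ and of $\alpha$ in both $\rho$ and $\sigma$, I would substitute $\rho=r^m$ and $\sigma=r^k$ at their maximal values. The identity $(1+a\rho)^2-(a+\rho)^2=(1-a^2)(1-\rho^2)$ then reshapes the bound as
\[
1+\frac{1-a}{1+ar^m}\left[\frac{(1+a)\,r^{2k}}{(1+ar^m)(1-r^m-r^k)}-(1-r^m)\right],
\]
and since $(1+a)/(1+ar^m)$ is increasing in $a$, the bracket attains its maximum at $a=1$, yielding the sharp condition $2r^{2k}\le(1-r^{2m})(1-r^m-r^k)$, i.e.\ $r\le R_{3,m,k}$. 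The compatibility constraint $\alpha\le 1/2$ coincides with this same inequality, so it is automatic on $[0,R_{3,m,k}]$, and the positivity $1-r^m-r^k>0$ follows from the definition of $R_{3,m,k}$.

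For sharpness I would follow the recipe used in Theorems \ref{Th1} and \ref{Th2}, taking the Möbius map $f_3(z)=(a+z)/(1+az)$ together with $\omega_m(z)=z^m$ and $\omega_k(z)=z^k$. A direct Taylor expansion at $w_0=r^m$ gives $f_3^{(n)}(r^m)/n!=(-a)^{n-1}(1-a^2)/(1+ar^m)^{n+1}$, and summing the ensuing geometric series writes the left-hand side at $z=r$ in the form $1+(1-a)(1+ar^m)^{-1}H(a,r)$ for an explicit rational function $H$; its $a\to 1^-$ limit is then analysed against the defining equation of $R_{3,m,k}$ to confirm that the radius cannot be enlarged.

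The main obstacle is the compounded monotonicity step: one must simultaneously keep the geometric-series simplification valid (requiring $\rho+\sigma<1$), apply the tangent-inequality $G(t)\le G(F)$ (requiring $\alpha\le 1/2$), and maximise in $\rho$, $\sigma$, $a$ without overshooting, so that the final condition reproduces exactly the equation $2r^{2k}=(1-r^{2m})(1-r^m-r^k)$ defining $R_{3,m,k}$. Once this bookkeeping is clean, the sharpness check reduces to a closed-form geometric-series computation with the Möbius extremal.
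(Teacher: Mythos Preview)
Your upper-bound argument is correct and is essentially the paper's proof: Lemma~\ref{lem2} termwise, geometric summation, the $G(t)=t+\alpha(1-t^2)$ monotonicity for $\alpha\le 1/2$, then Pick's bound and maximisation in $a$. The constraint $\alpha\le 1/2$ indeed collapses to the very equation defining $R_{3,m,k}$, so no extra case analysis is needed.

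The genuine gap is in the sharpness step. Following Theorems~\ref{Th1} and~\ref{Th2} by taking $f_3(z)=(a+z)/(1+az)$ does \emph{not} work here. With that choice and $\omega_m(z)=z^m$, $\omega_k(z)=z^k$, one has
\[
\frac{f_3^{(n)}(r^m)}{n!}=\frac{(-a)^{n-1}(1-a^2)}{(1+ar^m)^{n+1}},
\]
and the left-hand side at $z=r$ becomes $1+\dfrac{1-a}{1+ar^m}\,H(a,r)$ with
\[
\lim_{a\to 1^-}H(a,r)=-(1-r^m)+\frac{2r^{2k}}{(1+r^m)(1+r^m-r^k)}.
\]
This vanishes when $2r^{2k}=(1-r^{2m})(1+r^m-r^k)$, which is \emph{not} the defining equation of $R_{3,m,k}$ (the correct factor is $1-r^m-r^k$, not $1+r^m-r^k$). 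The reason is that the derivative estimate of Lemma~\ref{lem2} at a nonzero base point $w=r^m$ is asymptotically saturated, as $a\to 1$, only by a M\"obius map whose pole sits on the \emph{positive} real axis near $w$; the map $(a+z)/(1+az)$ has its pole at $-1/a$ and misses equality by the factor $[(1-r^m)/(1+r^m)]^{n-1}$.

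The paper therefore switches extremals for this theorem and takes $f_4(z)=(a-z)/(1-az)$ with $z=r<a^{1/m}$. Then $f_4^{(n)}(r^m)/n!=-(1-a^2)a^{n-1}/(1-ar^m)^{n+1}$, and one obtains
\[
\lim_{a\to 1^-}G_{10}(a,r)=-(1+r^m)+\frac{2r^{2k}}{(1-r^m)(1-r^k-r^m)},
\]
which is positive exactly when $2r^{2k}>(1-r^{2m})(1-r^k-r^m)$, i.e.\ for $r>R_{3,m,k}$, yielding sharpness.
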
 
\begin{proof}
Let $F_3(x)=1/((1-x)^{n-1}(1-x^2))$, where $0\leq x\leq x_0(\leq1)$ and $n\geq 2$. Then, $F_3'(x)=(n-1+x+nx)/((1-x)^{n+1} (1+x)^2)\geq 0$. Therefore, $F_3'(x)$ is 
monotonically increasing functions of $x$, and it follows that $F_3(x)\leq F_3(x_0)$.
Using similar argument as in the proof of \textrm{Theorem \ref{Th1}} and in view of \textrm{Lemma \ref{lem2}}, we obtain (\ref{f3}) and 
\be\label{f6} \frac{\left|f^{(n)}(\omega_m(z))\right|}{n!}\leq \frac{1-|f(\omega_m(z))|^2}{(1-|\omega_m(z)|)^{n-1}(1-|\omega_m(z)|^2)}\leq \frac{1-|f(\omega_m(z))|^2}{(1-r^m)^{n-1}(1-r^{2m})},\;\; n\geq 2.\ee
Therefore,
\beas && |f(\omega_m(z))|+\sum_{n=2}^\infty \left|\frac{f^{(n)}(\omega_m(z))}{n!}\right||\omega_k(z)|^n\\[2mm]
&\leq& |f(\omega_m(z))|+\sum_{n=2}^\infty \frac{1-|f(\omega_m(z))|^2}{(1-r^m)^{n-1}(1-r^{2m})}r^{kn}\\[2mm]
&=&|f(\omega_m(z))|+ \frac{1-|f(\omega_m(z))|^2}{(1+r^{m})}\sum_{n=2}^\infty \left(\frac{r^k}{1-r^m}\right)^n\eeas
\beas&=&|f(\omega_m(z))|+\frac{r^{2 k}}{(1-r^{2m}) (1-r^k-r^m)}\left(1-|f(\omega_m(z))|^2\right)\\[2mm]
&\leq& \frac{a+r^m}{1+ar^m}+\frac{r^{2 k}}{(1-r^{2m}) (1-r^k-r^m)}\left(1-\left(\frac{a+r^m}{1+ar^m}\right)^2\right)\\[2mm]
&=&1+\frac{(1-a)G_9(a,r)}{(1+ar^m)},\eeas
where the second inequality holds for such $r\in[0,1]$ satisfying $0\leq r^{2 k}/((1-r^{2m}) (1-r^k-r^m))\leq 1/2$, {\it i.e.,} for $r\in[0, r_{3,m,k}]$, where $r_{3,m,k}\in(0,1)$ is the smallest positive root of the equation $2r^{2 k}-(1-r^{2m}) (1-r^k-r^m)=0$ and 
\beas G_9(a,r)=\frac{(1+a)r^{2k}}{(1-r^k-r^m)(1+ar^m)}-(1-r^m).\eeas
Differentiating partially $G_9(a,r)$ with respect to $a$, we obtain
\beas \frac{\pa}{\pa a}G_9(a,r)=\frac{(1-r^m)r^{2k}}{(1-r^k-r^m)(1+a r^m)^2}\geq0,\eeas
which shows that $G_9(a,r)$ is a monotonically increasing function of $a\in[0,1]$ and it follows that 
\beas G_9(a,r)\leq G_9(1,r)=\frac{2r^{2k}}{(1-r^k-r^m)(1+r^m)}-(1-r^m)\leq 0\eeas
for $r\leq R_{3,m,k}$, where $R_{3,m,k}\in(0,1)$ is the smallest positive root of the equation 
\bea\label{f7} 2r^{2k}-(1-r^{2m})(1-r^k-r^m)=0. \eea
Note that $r_{3,m,k}=R_{3,m,k}$. \\[2mm]
\indent To prove the sharpness of the result, we consider the following functions 
\beas f_4(z)=\frac{a-z}{1-az}=A_0+\sum_{n=1}^\infty A_n z^n\quad\text{and}\quad \omega_m(z)=z^m\quad\text{for}\quad m\geq 1,\eeas
where $A_0=a\in[0,1)$, $A_n=-(1-a^2) a^{n-1}$ for $n\geq 1$. Note that $f_4^{(n)}(z)/n!=a^{n-1}(1-a^2)/(1-ar)^{n+1}$.
Thus, for $z=r<\sqrt[m]{a}$, we have 
\beas |f_4(\omega_m(z))|+\sum_{n=2}^\infty \left|\frac{f_4^{(n)}(\omega_m(z))}{n!}\right||\omega_k(z)|^n
&=&\frac{a- r^m}{1-a r^m}+\frac{(1-a^2)}{a(1-ar^m)}\sum_{n=2}^\infty \left(\frac{ar^{k}}{1-ar^m}\right)^n\\[2mm]
&=&1+\frac{(1-a)G_{10}(a, r)}{1-ar^m},\eeas
where 
\beas G_{10}(a, r)=-(1+r^{m})+\frac{a(1+a) r^{2 k}}{(1 -a r^m) (1-a r^k -a r^m)}.\eeas
It is evident that
\beas \lim_{a\to1^-} G_{10}(a, r)=-(1+r^{m})+\frac{2r^{2 k}}{(1-r^m) (1-r^k- r^m)}>0 \eeas
for $r>R_{3,m,k}$, where $R_{3,m,k}$ is the smallest positive root of the equation (\ref{f7}). Thus the radius $R_{3,m,k}$ is the best possible. This completes the proof.
\end{proof}
\begin{rem}
Setting $k=m=1$ and $\omega_1(z)=z$ in \textrm{Theorem \ref{Th3}} gives \textrm{Theorem 2.2} of \cite{LSX2018} for $N=2$. 
\end{rem}
\noindent In Table \ref{tab3} and Figure \ref{fig5}, we obtain the values of $R_{3,m,k}$ for certain values of $m,k\in\mathbb{N}$.
\begin{table}[H]
\centering
\begin{tabular}{*{7}{|c}|}
\hline
$m$ &1&2&2&3&4&10\\
\hline
k&1&1&2&2&10&15\\
\hline
$R_{3,m,k}$&0.355416&0.430586&0.596168&0.633513&0.869519&0.924302\\
\hline
\end{tabular}
\caption{$R_{3,m,k}$ is the smallest positive root of the equation (\ref{f7}) in $(0, 1)$}
\label{tab3}\end{table}
\begin{figure}[H]
\centering
\includegraphics[scale=0.8]{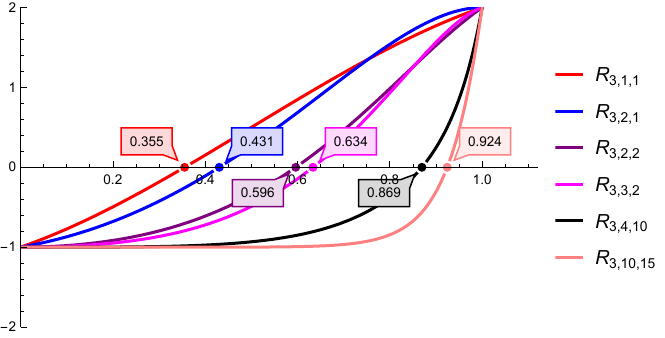}
\caption{Location of $R_{3,m,k}$ in $(0,1)$}
\label{fig5}
\end{figure}
\noindent In the following result, we establish the sharp improved version of the classical Rogosinski inequality containing two Schwarz functions. 
\begin{theo}\label{Th5}
Suppose that $f(z)=\sum_{n=0}^\infty a_n z^n$ is analytic in $\D$ with $|f(z)|\leq 1$ in $\D$ and $\omega_k\in\mathcal{B}_k$ for $k\geq 1$. Then for $N\geq 1$, we have
\beas |f(\omega_m(z))|+\sum_{n=1}^N \left|\frac{f^{(n)}(\omega_m(z))}{n!}\right||\omega_k(z)|^n\leq 1\eeas 
for $r\leq R_{4,m,k}$, where $R_{4,m,k}\in(0,1)$ is the smallest positive root of the equation 
\beas\frac{2r^{k}(1-r^m)^N-2r^{k(N+1)}}{(1-r^m)^{N-1}(1-r^k-r^m)(1+r^m)}-(1-r^m)=0.\eeas
The number $R_{4,m,k}$ cannot be improved.
\end{theo}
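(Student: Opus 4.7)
The plan is to follow the same general strategy as the proof of Theorem \ref{Th3}, adapted to a finite sum from $n=1$ to $n=N$ in place of the infinite tail starting at $n=2$. Setting $a = |a_0|$, the opening estimates are identical: Lemma \ref{lem1} combined with $|\omega_m(z)| \leq r^m$ (from the Schwarz lemma applied to $\omega_m \in \mathcal{B}_m$) gives $|f(\omega_m(z))| \leq (a+r^m)/(1+ar^m)$, while Lemma \ref{lem2} together with $|\omega_k(z)| \leq r^k$ yields, for each $n \geq 1$,
\[
\left|\frac{f^{(n)}(\omega_m(z))}{n!}\right| |\omega_k(z)|^n \leq \frac{\bigl(1-|f(\omega_m(z))|^2\bigr)\,r^{kn}}{(1-r^m)^{n-1}(1-r^{2m})}.
\]

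Summing these bounds over $n = 1, \ldots, N$ is a finite geometric series in the common ratio $r^k/(1-r^m)$, whose closed form produces
\[
\sum_{n=1}^N \left|\frac{f^{(n)}(\omega_m(z))}{n!}\right| |\omega_k(z)|^n \leq \Phi(r)\bigl(1 - |f(\omega_m(z))|^2\bigr),
\]
where
\[
\Phi(r) := \frac{r^k\bigl[(1-r^m)^N - r^{kN}\bigr]}{(1-r^m)^N(1+r^m)(1-r^m-r^k)}.
\]
The left-hand side of the theorem is thus dominated by $\Psi(|f(\omega_m(z))|)$ with $\Psi(t) := t + \Phi(r)(1-t^2)$. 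Provided $\Phi(r) \leq 1/2$, the function $\Psi$ is monotonically increasing on $[0,1]$ (its critical point $1/(2\Phi(r))$ lies at or beyond $1$), so $|f(\omega_m(z))|$ can be replaced by $(a+r^m)/(1+ar^m)$. A direct algebraic rearrangement, mirroring the derivation of $G_9$ in the proof of Theorem \ref{Th3}, then rewrites the resulting upper bound as
\[
1 + \frac{(1-a)(1-r^m)}{1+ar^m}\,H(a,r), \qquad H(a,r) := \frac{\Phi(r)(1+a)(1+r^m)}{1+ar^m} - 1,
\]
and an $a$-derivative calculation shows that $H$ is monotonically increasing in $a \in [0,1]$, with $H(1,r) = 2\Phi(r) - 1$. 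Consequently the bound does not exceed $1$ precisely when $\Phi(r) \leq 1/2$; multiplying this inequality by $1-r^m$ yields exactly the polynomial equation displayed in the theorem, whose smallest positive root is $R_{4,m,k}$.

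Sharpness is established via the extremal family $f_4(z) = (a-z)/(1-az)$, $\omega_m(z) = z^m$, $\omega_k(z) = z^k$. Evaluating the left-hand side of the theorem at $z = r$ produces an explicit expression of the form $1 + (1-a)\widetilde H(a,r)$, valid for $r$ in a suitable range; letting $a \to 1^-$ collapses $\widetilde H(a,r)$ to a positive multiple of $2\Phi(r) - 1$, which is strictly positive exactly when $r > R_{4,m,k}$, so the radius cannot be improved. The main obstacle I anticipate is purely computational: carefully tracking the $(1-r^m)^N$ versus $(1-r^m)^{N-1}$ exponents that arise from the geometric-series closed form so that the condition $\Phi(r) \leq 1/2$ matches the stated equation exactly, and verifying that this equation does possess a smallest root in $(0,1)$. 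The latter follows from $\Phi(0) = 0 < 1/2$ together with the blow-up of $\Phi(r)$ as $r$ approaches the root of $r^m + r^k = 1$ from below, so continuity forces a crossing.
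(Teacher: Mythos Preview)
Your proposal is correct and follows essentially the same route as the paper's own proof: the same Schwarz--Pick and derivative estimates, the same finite geometric sum producing your $\Phi(r)$ (which coincides with the paper's coefficient), the same $t+\alpha(1-t^2)$ monotonicity for $\alpha\le 1/2$, the same increasing-in-$a$ reduction to $H(1,r)=2\Phi(r)-1$, and the same extremal family $(a-z)/(1-az)$, $\omega_j(z)=z^j$ for sharpness. One small slip in your existence remark: $\Phi(r)$ does \emph{not} blow up at the root of $r^m+r^k=1$ (the singularity is removable, since $(1-r^m)^N-r^{kN}$ also vanishes there); instead note that $\Phi$ at that point equals $N/(1+r^m)>1/2$, or simply that $\Phi(r)\to+\infty$ as $r\to1^-$, which still forces the desired crossing.
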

\begin{proof}Using similar argument as in the proof of \textrm{Theorems \ref{Th1}} and \ref{Th3}, and in view of \textrm{Lemma \ref{lem2}}, we obtain (\ref{f3}), (\ref{f6}) and $|a_n|\leq 1-a^2$ for $n\geq 1$.
Therefore,
\beas && |f(\omega_m(z))|+\sum_{n=1}^N \left|\frac{f^{(n)}(\omega_m(z))}{n!}\right||\omega_k(z)|^n\\[2mm]
&\leq& |f(\omega_m(z))|+\sum_{n=1}^N \frac{1-|f(\omega_m(z))|^2}{(1-r^m)^{n-1}(1-r^{2m})}r^{kn}\\[2mm]
&=&|f(\omega_m(z))|+\frac{r^k(1 - r^m)^N -r^{k(N+1)}}{(1+r^m)(1-r^m)^{N}(1 - r^k - r^m)}\left(1-|f(\omega_m(z))|^2\right)\eeas
\beas &\leq& \frac{a+r^m}{1+ar^m}+\frac{r^k(1 - r^m)^N -r^{k(N+1)}}{(1+r^m)(1-r^m)^{N}(1 - r^k - r^m)}\left(1-\left(\frac{a+r^m}{1+ar^m}\right)^2\right)\\[2mm]
&=&1+\frac{(1-a)H_1(a,r)}{(1+ar^m)},\eeas
where 
\beas H_1(a,r)=\frac{(1+a)(r^k(1 - r^m)^N -r^{k(N+1)})}{(1-r^m)^{N-1}(1 - r^k - r^m)(1+ar^m)}-(1-r^m)\eeas
 and the second inequality holds for such $r\in[0,1]$ satisfying
\beas 0\leq \frac{r^k(1 - r^m)^N -r^{k(N+1)}}{(1+r^m)(1-r^m)^{N}(1 - r^k - r^m)}\leq \frac{1}{2},\eeas 
{\it i.e.,} for $r\in[0, r_{4,m,k}]$, where $r_{4,m,k}\in(0,1)$ is the smallest positive root of the equation 
\beas 2r^{k}(1-r^m)^N-2r^{k(N+1)}-(1+r^m)(1-r^m)^{N}(1-r^k-r^m)=0.\eeas
It is evident that $H_1(a,r)$ is a monotonically increasing function of $a\in[0,1]$ and it follows that 
\beas H_1(a,r)\leq H_1(1,r)=\frac{2r^{k}(1-r^m)^N-2r^{k(N+1)}}{(1-r^m)^{N-1}(1-r^k-r^m)(1+r^m)}-(1-r^m)\leq 0 \eeas
for $r\leq R_{4,m,k}$, where $R_{4,m,k}$ is the smallest positive root of the equation 
\bea\label{f8}\frac{2r^{k}(1-r^m)^N-2r^{k(N+1)}}{(1-r^m)^{N-1}(1-r^k-r^m)(1+r^m)}-(1-r^m)=0.\eea
Note that $R_{4,m,k}=r_{4,m,k}$.\\[2mm]
\indent To prove the sharpness of the result, we consider the following functions 
\beas f_5(z)=\frac{a-z}{1-az}=A_0+\sum_{n=1}^\infty A_n z^n\quad\text{and}\quad \omega_m(z)=z^m\quad\text{for}\quad m\geq 1,\eeas
where $A_0=a\in[0,1)$, $A_n=-(1-a^2) a^{n-1}$ for $n\geq 1$.
Thus, for $z=r<\sqrt[m]{a}$, we have 
\beas|f_5(\omega_m(z))|+\sum_{n=1}^N \left|\frac{f_5^{(n)}(\omega_m(z))}{n!}\right||\omega_k(z)|^n
&=&\frac{a- r^m}{1-a r^m}+\frac{(1-a^2)}{a(1-ar^m)}\sum_{n=1}^N \left(\frac{ar^{k}}{1-ar^m}\right)^{n}\\[2mm]
&=&1+\frac{(1-a)H_2(a, r)}{(1-ar^m)},\eeas
where
\beas H_2(a, r)=-(1+r^{m})+(1+a)\frac{r^k(1-ar^m)^N-a^Nr^{k(N+1)}}{(1-ar^m)^{N}(1-ar^k-a r^m)}.\eeas
It is evident that
\beas \lim_{a\to1^-} H_2(a, r)=\frac{(1+r^{m})}{(1-r^{m})}\left(\frac{2r^{k}(1-r^m)^N-2r^{k(N+1)}}{(1-r^m)^{N-1}(1-r^k-r^m)(1+r^m)}-(1-r^m)\right)>0 \eeas
for $r>R_{4,m,k}$, where $R_{4,m,k}$ is the smallest positive root of the equation (\ref{f8}). Thus the radius $R_{4,m,k}$ is the best possible. This completes the proof.
\end{proof}
\begin{rem}
We discuss some special cases of \textrm{Theorem \ref{Th5}}.
\begin{enumerate}
\item[(i)] Setting $k=1$, $m\to\infty$ with $\omega_m(z)=z^m$ in \textrm{Theorem \ref{Th5}}, we see that $R_{4,m,k}\to 1/2$. Hence, we have $\sum_{n=0}^N |a_n|z^n\leq 1$ for $r\leq 1/2$ and $N\geq 1$, which is the classical Rogosinski inequality.
\item[(ii)] Setting $k=1$, $m\to\infty$ and $N\to\infty$ with $\omega_m(z)=z^m$ in \textrm{Theorem \ref{Th5}}, we see that $R_{4,m,k}\to 1/3$. Hence, we have $\sum_{n=0}^\infty |a_n|z^n\leq 1$ for $r\leq 1/3$, which is the classical Bohr inequality.
\end{enumerate}
\end{rem}
In the following result, we establish the sharp improved version of the Bohr inequality involving two Schwarz functions.
\begin{theo}\label{Th4}
Suppose that $f(z)=\sum_{n=0}^\infty a_n z^n$ is analytic in $\D$ with $|f(z)|\leq 1$ in $\D$ and $\omega_k\in\mathcal{B}_k$ for $k\geq 1$. Then
\beas   |f(\omega_m(z))|^2+\sum_{n=2}^\infty \left|\frac{f^{(n)}(\omega_m(z))}{n!}\right||\omega_k(z)|^n\leq 1\eeas 
for $r\leq R_{5,m,k}$, where $R_{5,m,k}\in(0,1)$ is the smallest positive root of the equation $r^{2 k}-(1-r^{2m}) (1-r^k-r^m)=0$.
The number $R_{5,m,k}$ cannot be improved.
\end{theo}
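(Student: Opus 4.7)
The plan is to transport the argument of Theorem \ref{Th3} almost verbatim, replacing the linear term $|f(\omega_m(z))|$ by $|f(\omega_m(z))|^2$; the resulting one-variable reduction turns out to be simpler than in Theorem \ref{Th3} and requires a weaker constraint on the auxiliary parameter. Setting $a := |a_0|$, Lemma \ref{lem1} gives $|f(\omega_m(z))| \leq (a+r^m)/(1+ar^m)$, and Lemma \ref{lem2} combined with the Schwarz bound $|\omega_m(z)| \leq r^m$ yields
\[
\left|\frac{f^{(n)}(\omega_m(z))}{n!}\right| \leq \frac{1-|f(\omega_m(z))|^2}{(1-r^m)^{n-1}(1-r^{2m})}, \qquad n \geq 2.
\]
Summing the geometric series $\sum_{n \geq 2} (r^k/(1-r^m))^n$ in the range $r^k + r^m < 1$ collapses the tail to $\alpha\,(1 - |f(\omega_m(z))|^2)$ with $\alpha := r^{2k}/((1-r^{2m})(1-r^k-r^m))$, exactly as at (\ref{f6}) and its successor in the proof of Theorem \ref{Th3}.

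The decisive new ingredient is the observation that $t \mapsto t^2 + \alpha(1-t^2) = \alpha + (1-\alpha)t^2$ is monotonically increasing on $[0,1]$ whenever $\alpha \leq 1$, unlike the linear analogue from Theorem \ref{Th3} which required $\alpha \leq 1/2$. Substituting the Pick bound at $t = (a+r^m)/(1+ar^m)$, the full left-hand side is dominated by $1 - (1-\alpha)\bigl(1 - ((a+r^m)/(1+ar^m))^2\bigr)$, which is $\leq 1$ precisely when $\alpha \leq 1$; this rearranges to $r^{2k} - (1-r^{2m})(1-r^k-r^m) \leq 0$ and identifies $R_{5,m,k}$ as the smallest positive root of $r^{2k} - (1-r^{2m})(1-r^k-r^m) = 0$. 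This root lies in the admissible range $r^k + r^m < 1$, since the polynomial on the left equals $-1$ at $r = 0$ and equals $r^{2k} > 0$ at any $r$ with $r^k + r^m = 1$, so continuity supplies a sign change inside the interval.

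For sharpness I would test $f(z) = (a-z)/(1-az)$ together with $\omega_m(z) = z^m$ and $\omega_k(z) = z^k$, using the closed form $|f^{(n)}(r^m)/n!| = (1-a^2)a^{n-1}/(1-ar^m)^{n+1}$. After summing the geometric series and clearing denominators, the identity $(1-ar^m)^2 - (a-r^m)^2 = (1-a^2)(1-r^{2m})$ reduces the left-hand side of the theorem's inequality minus $1$ to a positive multiple of $ar^{2k} - (1-r^{2m})(1-ar^k-ar^m)$; letting $a \to 1^-$ produces the sign condition $r^{2k} - (1-r^{2m})(1-r^k-r^m) > 0$, which holds exactly for $r > R_{5,m,k}$. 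The whole argument is mechanical once the monotonicity observation in the second paragraph is in place; I anticipate no genuine obstacle, only careful bookkeeping of the geometric-series algebra that has already been carried out in Theorems \ref{Th1}, \ref{Th3} and \ref{Th5}.
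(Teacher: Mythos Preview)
Your proposal is correct and follows essentially the same route as the paper: both bound the tail via Lemma~\ref{lem2} to reach $|f(\omega_m(z))|^2+\alpha(1-|f(\omega_m(z))|^2)$ with $\alpha=r^{2k}/((1-r^{2m})(1-r^k-r^m))$, invoke monotonicity of $t\mapsto t^2+\alpha(1-t^2)$ on $[0,1]$ for $\alpha\le 1$ to substitute the Pick bound, and test sharpness with $f(z)=(a-z)/(1-az)$, $\omega_m(z)=z^m$, letting $a\to 1^-$. Your explicit verification that $R_{5,m,k}$ lies in the range $r^k+r^m<1$ is a nice addition the paper leaves implicit.
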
 
\begin{proof}
Using similar argument as in the proof of \textrm{Theorems \ref{Th1}} and \ref{Th3}, and in view of \textrm{Lemma \ref{lem2}}, we obtain (\ref{f3}) and (\ref{f6}).
Therefore
\beas && |f(\omega_m(z))|^2+\sum_{n=2}^\infty \left|\frac{f^{(n)}(\omega_m(z))}{n!}\right||\omega_k(z)|^n\\[2mm]
&\leq&|f(\omega_m(z))|^2+\frac{r^{2 k}}{(1-r^{2m}) (1-r^k-r^m)}\left(1-|f(\omega_m(z))|^2\right)\\[2mm]
&\leq& 1+\left(1-\frac{r^{2 k}}{(1-r^{2m}) (1-r^k-r^m)}\right)\left(\left(\frac{a+r^m}{1+ar^m}\right)^2-1\right)\\[2mm]
&=&1-\frac{(1-a^2)\left((1-r^{2m}) (1-r^k-r^m)-r^{2 k}\right)}{(1-r^k-r^m)(1+ar^m)^2}\\[2mm]
&=&1-\frac{(1-a^2)H_3(a,r)}{(1+ar^m)^2},\eeas
where the second inequality holds for such $r\in[0,1]$ satisfying $r^{2 k}/((1-r^{2m}) (1-r^k-r^m))\leq 1$, {\it i.e.,} for $r\in[0, R_{5,m,k}]$, where $R_{5,m,k}\in(0,1)$ is the smallest positive root of the equation $r^{2 k}-(1-r^{2m}) (1-r^k-r^m)=0$ and 
\beas H_3(a,r)=\frac{(1-r^{2m}) (1-r^k-r^m)-r^{2 k}}{(1-r^k-r^m)}\geq 0\quad\text{for}\quad r\leq R_{5,m,k}.\eeas
\indent To prove the sharpness of the result, we consider the following functions 
\beas f_6(z)=\frac{a-z}{1-az}=A_0+\sum_{n=1}^\infty A_n z^n\quad\text{and}\quad \omega_m(z)=z^m\quad\text{for}\quad m\geq 1,\eeas
where $A_0=a\in[0,1)$, $A_n=-(1-a^2) a^{n-1}$ for $n\geq 1$.
Thus, for $z=r<\sqrt[m]{a}$, we have 
\beas|f_6(\omega_m(z))|^2+\sum_{n=2}^\infty \left|\frac{f_6^{(n)}(\omega_m(z))}{n!}\right||\omega_k(z)|^n
&=&\left(\frac{a- r^m}{1-a r^m}\right)^2+\sum_{n=2}^\infty \frac{a^{n-1}(1-a^2)r^{kn}}{(1-ar^m)^{n+1}}\\[2mm]
&=&1+\frac{(1-a^2)H_4(a, r)}{(1-ar^m)^2},\eeas
where 
\beas H_4(a, r)=-(1+r^{2m})+\frac{ar^{2 k}}{(1-a r^k -a r^m)}.\eeas
It is evident that
\beas \lim_{a\to1^-} H_4(a, r)=-(1+r^{2m})+\frac{r^{2 k}}{(1-r^k- r^m)}>0 \eeas
for $r>R_{5,m,k}$, where $R_{5,m,k}$ is the smallest positive root of the equation $r^{2 k}-(1-r^{2m}) (1-r^k-r^m)=0$. Thus the radius $R_{5,m,k}$ is the best possible. This completes the proof.
\end{proof}
\begin{rem}
Setting $k=m=1$ and $\omega_1(z)=z$ in \textrm{Theorem \ref{Th3}} gives \textrm{Corollary 2.3} of \cite{LSX2018} for $N=2$. 
\end{rem}
\noindent In Table \ref{tab4} and Figure \ref{fig6}, we obtain the values of $R_{5,m,k}$ for certain values of $m,k\in\mathbb{N}$.
\begin{table}[H]
\centering
\begin{tabular}{*{7}{|c}|}
\hline
$m$ &1&2&2&3&4&10\\
\hline
k&1&1&2&2&10&15\\
\hline
$R_{5,m,k}$&0.403032&0.49478&0.634848&0.676754&0.880073&0.931868\\
\hline
\end{tabular}
\caption{$R_{5,m,k}$ is the smallest positive root of the equation $r^{2 k}-(1-r^{2m}) (1-r^k-r^m)=0$ in (0, 1)}
\label{tab4}\end{table}
\begin{figure}[H]
\centering
\includegraphics[scale=0.9]{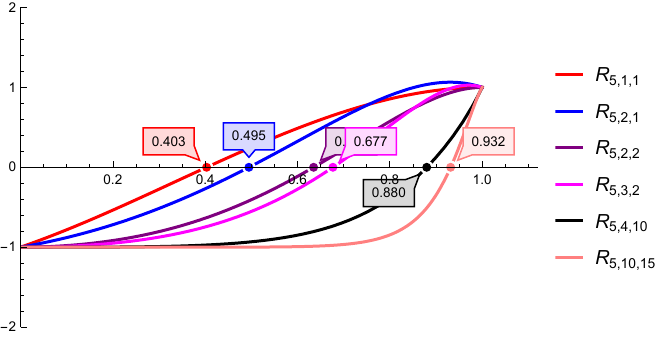}
\caption{Location of $R_{5,m,k}$ in $(0,1)$}
\label{fig6}
\end{figure}
\section*{Declarations}
\noindent{\bf Acknowledgment:} The work of the second author is supported by University Grants Commission (IN) fellowship (No. F. 44 - 1/2018 (SA - III)).\\[2mm]
{\bf Conflict of Interest:} The authors declare that there are no conflicts of interest regarding the publication of this paper.\\[1mm]
{\bf Availability of data and materials:} Not applicable.\\[1mm]
{\bf Authors' contributions:} All authors contributed equally to the investigation of the problem, and all authors have read and approved the final manuscript.

\end{document}